\documentclass [twoside,reqno,  12pt] {amsart}

%\TagsOnRight
%\usepackage{psfrag}

%\usepackage{showkeys}

\usepackage{amsfonts}
\usepackage{amssymb}
\usepackage{a4}
\usepackage{color}

\newtheorem{thm}{Theorem}[section]
\newtheorem{cor}[thm]{Corollary}
\newtheorem{lem}[thm]{Lemma}
\newtheorem{prop}[thm]{Proposition}

\newtheorem{rem}[thm]{Remark}

\theoremstyle{definition}

\numberwithin{equation}{section}

%       Math definitions

\newcommand{\C}{\mathbb{C}}

\newcommand{\N}{\mathbb{N}}

\newcommand{\R}{\mathbb{R}}

\newcommand{\supp}{\operatorname{supp}}

%Matti's macros
\parindent0pt
\parskip6pt

\def\hat{\widehat}
\def\tilde{\widetilde}
\def \bfo {\begin {eqnarray*} }
\def \efo {\end {eqnarray*} }
\def \ba {\begin {eqnarray*} }
\def \ea {\end {eqnarray*} }
\def \beq {\begin {eqnarray}}
\def \eeq {\end {eqnarray}}
\def \supp {\hbox{supp }}

\def \p {\partial}

\def\hat{\widehat}
\def\tilde{\widetilde}
\def \bfo {\begin {eqnarray*} }
\def \efo {\end {eqnarray*} }
\def \ba {\begin {eqnarray*} }
\def \ea {\end {eqnarray*} }
\def \beq {\begin {eqnarray}}
\def \eeq {\end {eqnarray}}
\def \supp {\hbox{supp }}

\def \p {\partial}

%      \interval is used to provide better spacing after a [ that
%      is used as a closing delimiter.

\begin{document}

 \title[Partial data inverse problems for semilinear elliptic equations]{Partial data inverse problems for semilinear elliptic equations with gradient nonlinearities}

\author[Krupchyk]{Katya Krupchyk}

\address
        {K. Krupchyk, Department of Mathematics\\
University of California, Irvine\\
CA 92697-3875, USA }

\email{katya.krupchyk@uci.edu}

\author[Uhlmann]{Gunther Uhlmann}

\address
       {G. Uhlmann, Department of Mathematics\\
       University of Washington\\
       Seattle, WA  98195-4350\\
       USA\\
        and Institute for Advanced Study of the Hong Kong University of Science and Technology}
\email{gunther@math.washington.edu}

\maketitle

\begin{abstract}
We show that the linear span of the set of scalar products of gradients of harmonic functions on a bounded smooth domain $\Omega\subset \R^n$ which vanish on a closed proper subset of the boundary is dense in $L^1(\Omega)$. We apply this density result to solve some partial data inverse boundary problems for a class of semilinear elliptic PDE with quadratic gradient terms.

\end{abstract}

\section{Introduction and statement of results}

Let $\Omega\subset \R^n$, $n\ge 2$, be a connected bounded open set with $C^\infty$ boundary. In the paper \cite{DKSU} it is established that  the linear span of the set of products of harmonic functions in $C^\infty(\overline{\Omega})$, which vanish on a closed proper subset of the boundary, is dense in $L^1(\Omega)$.  This result is motivated by the Calder\'on inverse problem with partial data, see \cite{KS_review} and \cite{Uhlmann_review}  for review,  and it provides the solution of the linearized version of the partial data problem at the zero potential.  The recent works \cite{KU_2019} and \cite{LLLS_new} have exploited this density result to give a solution for the partial data inverse boundary problem for a class of semilinear elliptic PDE.

The purpose of this paper is twofold. First we shall give an extension of the density result of \cite{DKSU} where the set of products of  harmonic functions  which vanish on a closed proper subset of the boundary, is replaced by the set of scalar products of gradients of such functions.  We shall then apply this density result to solve some partial data inverse problems for a class of semilinear elliptic PDE with quadratic gradient terms.

The first result of the paper, extending the corresponding result of \cite{DKSU}, is as follows.
\begin{thm}
\label{thm_product_gradients}
Let $\Omega\subset \R^n$, $n\ge 2$, be a connected bounded open set with $C^\infty$ boundary,  let $ \Gamma\subset \p \Omega$ be an open nonempty subset of $\p \Omega$, and let $\tilde \Gamma=\p \Omega\setminus \Gamma$. Then the  linear span of the set of scalar products of gradients of harmonic functions in $C^\infty(\overline{\Omega})$, which vanish on $\tilde \Gamma$, is dense in $L^1(\Omega)$.
\end{thm}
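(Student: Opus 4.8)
The plan is to argue by duality and then feed the complex geometric optics (CGO) solutions of \cite{DKSU} into the integral identity; the new feature, compared with products of harmonic functions, is that differentiating the CGO solutions produces a factor $\tau^{2}$ multiplied by a \emph{non-vanishing} smooth coefficient, so that once that coefficient is divided out one is back to the relation already analysed in \cite{DKSU}. First I would pass to the dual problem: by the Hahn--Banach theorem it suffices to prove that if $f\in L^{\infty}(\Omega)$ satisfies $\int_{\Omega}f\,\nabla u_{1}\cdot\nabla u_{2}\,dx=0$ for all harmonic $u_{1},u_{2}\in C^{\infty}(\overline{\Omega};\C)$ vanishing on $\tilde\Gamma$, then $f\equiv 0$. (Passing to complex-valued harmonic functions is harmless: taking real and imaginary parts reduces the complex density statement to the real one.) Fix such an $f$ and extend it by zero to $\R^{n}$.

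Next I would bring in the CGO solutions. Following \cite{DKSU} and the Carleman estimate with boundary terms of Kenig--Sj\"ostrand--Uhlmann, choose a suitably placed point $x_{0}\in\R^{n}\setminus\overline{\Omega}$, let $\varphi(x)=\log|x-x_{0}|$ be the logarithmic limiting Carleman weight and $\psi$ a conjugate weight, and put $\Phi=\varphi+i\psi$; then $\nabla\varphi\cdot\nabla\psi=0$, $|\nabla\psi|=|\nabla\varphi|=|x-x_{0}|^{-1}$ (whence $\nabla\Phi\cdot\nabla\Phi=0$), and $\nabla\varphi$ never vanishes on $\overline{\Omega}$. For $\tau\gg 1$ one constructs harmonic functions $u_{1},u_{2}\in C^{\infty}(\overline{\Omega};\C)$, both vanishing on $\tilde\Gamma$, of the form
\[
u_{1}=e^{\tau\Phi}(a_{1}+r_{1}),\qquad u_{2}=e^{-\tau\overline{\Phi}}(a_{2}+r_{2}),
\]
with $a_{1},a_{2}$ smooth amplitudes solving the relevant first-order transport equations (leaving enough freedom to recover $f$ as in \cite{DKSU}), and with $\|r_{j}\|_{L^{2}(\Omega)}=o(1)$ and $\|\nabla r_{j}\|_{L^{2}(\Omega)}=o(\tau)$ as $\tau\to\infty$. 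That \emph{both} $u_{1}$ and $u_{2}$ can be taken to vanish on $\tilde\Gamma$, with these remainder bounds, is exactly the content of the construction in \cite{DKSU}: it relies on the boundary Carleman estimates and on $\tilde\Gamma$ being a proper closed subset of $\p\Omega$, and is the technical heart that the present argument borrows wholesale. Note that $u_{1}u_{2}=e^{2i\tau\psi}(a_{1}+r_{1})(a_{2}+r_{2})$ stays bounded as $\tau\to\infty$.

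Finally comes the one genuinely new computation. Differentiating,
\begin{align*}
\nabla u_{1}&=e^{\tau\Phi}\big(\tau\nabla\Phi\,(a_{1}+r_{1})+\nabla a_{1}+\nabla r_{1}\big),\\
\nabla u_{2}&=e^{-\tau\overline{\Phi}}\big(-\tau\nabla\overline{\Phi}\,(a_{2}+r_{2})+\nabla a_{2}+\nabla r_{2}\big),
\end{align*}
so, multiplying and using $e^{\tau(\Phi-\overline{\Phi})}=e^{2i\tau\psi}$,
\[
\nabla u_{1}\cdot\nabla u_{2}=-\tau^{2}\,(\nabla\Phi\cdot\nabla\overline{\Phi})\,a_{1}a_{2}\,e^{2i\tau\psi}+R_{\tau},\qquad\|R_{\tau}\|_{L^{1}(\Omega)}=o(\tau^{2})\quad(\tau\to\infty),
\]
the bound on $R_{\tau}$ following from the boundedness of $a_{j},\nabla a_{j},\nabla\varphi,\nabla\psi$ on $\overline{\Omega}$ together with the remainder estimates above --- and it is precisely here that the semiclassical control $\|\nabla r_{j}\|_{L^{2}}=o(\tau)$, rather than mere $L^{2}$ smallness of $r_{j}$, is indispensable, since $\nabla u_{j}$ carries a factor $\tau$. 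The key algebraic point is that $\nabla\Phi\cdot\nabla\overline{\Phi}=|\nabla\varphi|^{2}+|\nabla\psi|^{2}=2|x-x_{0}|^{-2}$ is smooth and strictly positive on $\overline{\Omega}$, in sharp contrast with $\nabla\Phi\cdot\nabla\Phi=0$ (which is why the two CGO phases are conjugate rather than equal). Hence the hypothesis $\int_{\Omega}f\,\nabla u_{1}\cdot\nabla u_{2}\,dx=0$, divided by $-2\tau^{2}$ and passed to the limit $\tau\to\infty$, yields
\[
\int_{\Omega}\big(|x-x_{0}|^{-2}f\big)\,a_{1}a_{2}\,e^{2i\tau\psi}\,dx=o(1)\qquad(\tau\to\infty),
\]
which is exactly the limiting relation that appears in \cite{DKSU} for products of harmonic functions, with $f$ replaced by its smooth, strictly positive multiple $|x-x_{0}|^{-2}f$; the extra factor only turns the integral transform occurring there into an attenuated version of it, which is still injective. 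Therefore, letting $x_{0}$ and $\psi$ range over the admissible choices, the argument of \cite{DKSU} forces $|x-x_{0}|^{-2}f\equiv 0$, hence $f\equiv 0$. Beyond this borrowing from \cite{DKSU} --- which is also where the real difficulty sits, namely producing harmonic functions vanishing \emph{exactly} on the prescribed closed set $\tilde\Gamma$ with semiclassical $H^{1}$ remainder control and amplitudes rich enough to detect $f$ --- the only genuinely new ingredient is the identification of the nowhere-vanishing factor $\nabla\Phi\cdot\nabla\overline{\Phi}=2|\nabla\varphi|^{2}$, and that is exactly what reduces the density statement for scalar products of gradients to the one for products.
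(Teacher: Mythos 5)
Your central heuristic is the same as the paper's: after feeding in the exponential/CGO solutions, the leading coefficient produced by $\nabla u_1\cdot\nabla u_2$ is non-vanishing, so the density for scalar products of gradients reduces to the density for ordinary products of harmonic functions. The paper achieves this with the linear-phase solutions of \cite{DKSU}, $u(x,\zeta)=e^{-\frac{i}{h}x\cdot\zeta}+w(x,\zeta)$ with $\zeta^2=0$, for which the leading factor is the \emph{constant} $\zeta\cdot\eta$; choosing $\zeta_0=\gamma=(i,1,0,\dots,0)$ and $\eta_0=-\overline\gamma$ makes $\zeta\cdot\eta\neq0$ and the amplitudes are trivial ($a_j\equiv1$), so one can simply divide by $\zeta\cdot\eta$ in \eqref{eq_4_6}. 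Your version with the logarithmic weight and $\nabla\Phi\cdot\nabla\overline\Phi=2|x-x_0|^{-2}$ is the same observation in a different gauge and is not where the difficulty lies.

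There are two genuine gaps. First, the exponential-solution argument is intrinsically \emph{local}: because $e^{-\frac{i}{h}x\cdot\zeta}$ (or $e^{\tau\Phi}$) decays away from the accessible boundary and the correction $w$ carries an exponential weight governed by the supporting function of $\supp\chi\cap\partial\Omega$, the cancellation identity only forces $f=0$ in a small half-ball $B(x_0,\delta)\cap\Omega$ near $\Gamma$ (this is Proposition~\ref{prop_local}). To conclude $f\equiv0$ on all of $\Omega$ one must propagate along a curve into the interior by repeatedly solving the local problem on shrunken domains, and this propagation rests on a Runge-type approximation: one must approximate an arbitrary harmonic function on the shrunken domain $\Omega_1$ that vanishes on $\partial\Omega_1\cap\partial\Omega_2$ by solutions $\mathcal{G}a|_{\Omega_1}$. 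In \cite{DKSU} this approximation is done in $L^2$, which suffices because the dual pairing there is $\int f\,uv$. Here the pairing is $\int f\,\nabla u\cdot\nabla v$, which is continuous only in $H^1\times H^1$, so the Runge approximation must be upgraded to the $H^1(\Omega_1)$ topology. That upgrade is precisely Lemma~\ref{lem_Runge}, and it requires new work (localizing the distributional support of $-\Delta f-v$ to $\partial\Omega_1\cap\partial\Omega_2$ via \cite[Theorem~3.29]{McLean_book} and \cite[Lemma~3.39]{McLean_book}). Your write-up never addresses this, and the statement cannot be obtained without it.

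Second, the claim that the required semiclassical $H^1$ bound on the remainder ``is exactly the content of the construction in \cite{DKSU}'' is an overclaim: \cite{DKSU} only needs, and only proves, an $L^2$ bound on the correction because they pair with a bounded weight. The present paper has to (and does) upgrade this to the $H^1(\Omega)$ estimate in \eqref{eq_4_1}--\eqref{eq_4_2}, using the interpolation bound $\|e^{-\frac{i}{h}x\cdot\zeta}\chi\|_{H^{1/2}(\partial\Omega)}\lesssim\|\cdot\|_{H^1}^{1/2}\|\cdot\|_{L^2}^{1/2}$; this is easy but it is not already in \cite{DKSU} and must be checked. Finally, the closing sentence about the ``attenuated transform'' is a restatement of the desired local vanishing rather than an argument for it; in the paper's linear-phase formulation there is no attenuation to worry about, since $\zeta\cdot\eta$ is a nonzero scalar and one immediately arrives at the Fourier-type estimate \eqref{eq_4_8}, after which the Paley--Wiener-type argument of \cite{DKSU} applies verbatim.
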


We shall next proceed to state our results concerning inverse boundary problems for a class of semilinear elliptic PDE with quadratic gradient terms. Specifically, we shall consider the following Dirichlet problem,
\begin{equation}
\label{eq_ref_1}
\begin{cases}
-\Delta u+ q(x)(\nabla u)^2+ V(x,u)=0 \quad \text{in}\quad \Omega, \\
u=f \quad \text{on}\quad \p \Omega.
\end{cases}
\end{equation}
Here $q\in C^\alpha(\overline{\Omega})$  for some $0<\alpha<1$, the H\"older space,  and the function  $V:\overline{\Omega}\times \C\to \C$ satisfies the following conditions:
\begin{itemize}
\item[(i)] the map $\C\ni z\mapsto V(\cdot,z)$ is holomorphic with values in $C^\alpha(\overline{\Omega})$,
\item[(ii)] $V(x,0)=\p_z V(x,0)=\p^2_z V(x,0)=0$, for all $x\in \overline{\Omega}$.
\end{itemize}
We have also written $(\nabla u)^2=\nabla u\cdot \nabla u$.
  It follows from (i) and (ii) that $V$ can be expanded into a power series
\begin{equation}
\label{eq_V}
V(x,z)=\sum_{k=3}^\infty V_k(x) \frac{z^k}{k!},\quad V_k(x):=\p_z^k V(x,0)\in C^\alpha(\overline{\Omega}),
\end{equation}
converging in the $C^\alpha(\overline{\Omega})$ topology.

It is  shown in Appendix \ref{App_well_posedness} that  there exist $\delta>0$ and $C>0$ such that
when  $f\in B_{\delta} (\p \Omega):=\{f\in C^{2,\alpha}(\p \Omega): \|f\|_{C^{2,\alpha}(\p \Omega)}<\delta\}$, the problem \eqref{eq_ref_1} has a unique solution $u=u_f\in C^{2,\alpha}(\overline{\Omega})$ satisfying $\|u\|_{C^{2,\alpha}(\overline{\Omega})}<C \delta$.

Let $\Gamma_1, \Gamma_2\subset \p \Omega$ be  arbitrary non-empty open subsets of the boundary $\p \Omega$. Associated to the problem \eqref{eq_ref_1}, we define the partial Dirichlet--to--Neumann map $\Lambda_{q,V}^{\Gamma_1, \Gamma_2}f = \partial_{\nu} u_f|_{\Gamma_2}$, where $f\in B_{\delta} (\p \Omega)$, $\supp(f)\subset \Gamma_1$. Here  $\nu$ is the unit outer normal to the boundary.

The second  result of this paper is as follows.
\begin{thm}
\label{thm_main_2}
Let $\Omega\subset \R^n$, $n\ge 2$, be a connected bounded open set with $C^\infty$ boundary, and let $\Gamma_1, \Gamma_2\subset \p \Omega$ be arbitrary open non-empty subsets of the boundary $\p \Omega$.  Let $q_1,q_2 \in C^\alpha(\overline{\Omega})$ and  $V^{(1)}, V^{(2)}: \overline{\Omega}\times \C\to \C$ satisfy the assumptions (i) and (ii).  Assume that $\Lambda_{q_1, V^{(1)}}^{\Gamma_1, \Gamma_2} =\Lambda_{q_2,V^{(2)}}^{\Gamma_1, \Gamma_2}$. Then $q_1=q_2$ in $\Omega$ and $V^{(1)}=V^{(2)}$ in $\Omega\times \C$.
\end{thm}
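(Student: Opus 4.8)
The plan is to use the higher-order linearization method, which reduces the problem to the density statements for products and for scalar products of gradients of harmonic functions. Fix $N\in\N$, boundary data $f_1,\dots,f_N\in C^{2,\alpha}(\p\Omega)$ with $\supp(f_j)\subset\Gamma_1$, and for $\epsilon=(\epsilon_1,\dots,\epsilon_N)$ sufficiently small set $f=\sum_{j=1}^N\epsilon_jf_j$; let $u^{(\ell)}=u^{(\ell)}(x;\epsilon)$ denote the solution of \eqref{eq_ref_1} with $q=q_\ell$, $V=V^{(\ell)}$. By the well-posedness result of Appendix~\ref{App_well_posedness} together with assumption (i), $u^{(\ell)}$ depends holomorphically on $\epsilon$ near $0$ and $u^{(\ell)}(x;0)=0$. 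Differentiating \eqref{eq_ref_1} in $\epsilon$ at $\epsilon=0$ produces a hierarchy of linear elliptic boundary value problems. The first linearization $v_j:=\p_{\epsilon_j}u^{(\ell)}|_{\epsilon=0}$ satisfies $\Delta v_j=0$ in $\Omega$, $v_j=f_j$ on $\p\Omega$ (here one uses $u^{(\ell)}(x;0)=0$ and $\p_zV^{(\ell)}(x,0)=0$), so $v_j$ is independent of $\ell$, lies in $C^\infty(\overline\Omega)$, and vanishes on $\tilde\Gamma_1:=\p\Omega\setminus\Gamma_1$; this step carries no information and merely supplies the building blocks for the higher linearizations.

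Next I would extract $q$. Using $\p_z^2V^{(\ell)}(x,0)=0$, the second linearization $w^{(\ell)}_{jk}:=\p_{\epsilon_j}\p_{\epsilon_k}u^{(\ell)}|_{\epsilon=0}$ solves
\[
-\Delta w^{(\ell)}_{jk}=-2q_\ell\,\nabla v_j\cdot\nabla v_k \quad\text{in }\Omega,\qquad w^{(\ell)}_{jk}=0 \quad\text{on }\p\Omega,
\]
and hence $w^{(\ell)}_{jk}\in C^{2,\alpha}(\overline\Omega)$. Differentiating the identity $\Lambda^{\Gamma_1,\Gamma_2}_{q_1,V^{(1)}}=\Lambda^{\Gamma_1,\Gamma_2}_{q_2,V^{(2)}}$ twice in $\epsilon$ gives $\p_\nu w^{(1)}_{jk}=\p_\nu w^{(2)}_{jk}$ on $\Gamma_2$. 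Let $v_0\in C^\infty(\overline\Omega)$ be harmonic with $v_0=0$ on $\tilde\Gamma_2:=\p\Omega\setminus\Gamma_2$. Applying Green's formula to $W:=w^{(1)}_{jk}-w^{(2)}_{jk}$ and $v_0$, and using $W|_{\p\Omega}=0$ together with $(\p_\nu W)v_0=0$ on $\p\Omega$ (as $\p_\nu W=0$ on $\Gamma_2$ and $v_0=0$ on $\tilde\Gamma_2$), one obtains
\[
\int_\Omega (q_1-q_2)\,v_0\,\nabla v_j\cdot\nabla v_k\,dx=0
\]
for all harmonic $v_j,v_k\in C^\infty(\overline\Omega)$ vanishing on $\tilde\Gamma_1$ and all such $v_0$ vanishing on $\tilde\Gamma_2$. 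Fixing $v_0$ and invoking \thmref{thm_product_gradients} (density of scalar products $\nabla v_j\cdot\nabla v_k$ in $L^1(\Omega)$) forces $(q_1-q_2)v_0=0$ in $\Omega$. Choosing a single $v_0\not\equiv0$ — say the harmonic extension of some nonzero $f_0\in C_c^\infty(\Gamma_2)$ — and using that, by real-analyticity, the zero set of $v_0$ in the connected domain $\Omega$ is negligible, one concludes $q_1-q_2=0$ a.e., hence $q_1=q_2=:q$ by continuity.

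With $q$ identified, I would recover the $V^{(\ell)}$ by induction on the order of linearization. By \eqref{eq_V}, $V^{(\ell)}_m=0$ for $m\le2$. Assume $V^{(1)}_m=V^{(2)}_m$ for all $m<k$ with $k\ge3$. Since $q_1=q_2$ and the $j$-th linearized solutions for $j<k$ are determined solely by $q$ and $V_3,\dots,V_{k-1}$, they coincide for $\ell=1,2$; inspecting the $k$-th linearization $w^{(\ell)}_{j_1\cdots j_k}:=\p_{\epsilon_{j_1}}\cdots\p_{\epsilon_{j_k}}u^{(\ell)}|_{\epsilon=0}$ one finds (a routine computation using the product rule and $u^{(\ell)}(x;0)=0$) that the only source term depending on $V^{(\ell)}$ beyond this common lower-order data is $V^{(\ell)}_k\,v_{j_1}\cdots v_{j_k}$. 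Thus $W:=w^{(1)}_{j_1\cdots j_k}-w^{(2)}_{j_1\cdots j_k}$ satisfies $-\Delta W=-(V^{(1)}_k-V^{(2)}_k)\,v_{j_1}\cdots v_{j_k}$ in $\Omega$, $W|_{\p\Omega}=0$, with $\p_\nu W=0$ on $\Gamma_2$ by the $k$-th $\epsilon$-derivative of the Dirichlet-to-Neumann equality. Pairing with a harmonic $v_0\in C^\infty(\overline\Omega)$ vanishing on $\tilde\Gamma_2$ as before yields
\[
\int_\Omega (V^{(1)}_k-V^{(2)}_k)\,v_0\,v_{j_1}\cdots v_{j_k}\,dx=0.
\]
Since products $v_{j_1}v_{j_2}$ of two harmonic functions vanishing on $\tilde\Gamma_1$ are dense in $L^1(\Omega)$ by the result of \cite{DKSU}, this gives $(V^{(1)}_k-V^{(2)}_k)\,v_0\,v_{j_3}\cdots v_{j_k}=0$ in $\Omega$; choosing the remaining harmonic factors $\not\equiv0$ and arguing via real-analyticity as above gives $V^{(1)}_k=V^{(2)}_k$. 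By induction $V^{(1)}_k=V^{(2)}_k$ for all $k$, and since \eqref{eq_V} converges in $C^\alpha(\overline\Omega)$ for every $z\in\C$, we conclude $V^{(1)}=V^{(2)}$ in $\Omega\times\C$.

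The main obstacle — and the place where \thmref{thm_product_gradients} is genuinely needed — is the recovery of $q$: the quadratic gradient nonlinearity produces exactly the scalar product $\nabla v_j\cdot\nabla v_k$ in the source of the second linearized equation, so the resulting linear Calder\'on-type problem involves gradients of harmonic functions rather than the functions themselves and cannot be settled by the density result of \cite{DKSU} alone. A secondary, more technical point is the rigorous justification of the linearization hierarchy — the existence of the $\epsilon$-derivatives, their identification as solutions of the above problems, and their $C^{2,\alpha}(\overline\Omega)$ regularity — which rests on the well-posedness and holomorphic-dependence statements of Appendix~\ref{App_well_posedness}.
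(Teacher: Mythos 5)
Your proposal is correct and follows essentially the same route as the paper: second-order linearization, Green's identity with an extra harmonic factor supported on $\Gamma_2$ to obtain $\int_\Omega(q_1-q_2)(\nabla v^{(1)}\cdot\nabla v^{(2)})v^{(3)}\,dx=0$, then Theorem~\ref{thm_product_gradients} and the measure-zero property of harmonic zero sets to conclude $q_1=q_2$, followed by an induction on the order $k$ in which the quadratic-gradient contributions drop out (since $q$ and the lower-order linearized solutions now coincide) and the density result of \cite{DKSU} recovers each $V_k$. The only cosmetic differences are that you spell out the general inductive step, whereas the paper treats $k=3$ explicitly and refers to \cite{KU_2019} for the rest, and that you should take $f_j\in C^\infty(\p\Omega)$ (as the paper does) rather than merely $C^{2,\alpha}$ if you want $v_j\in C^\infty(\overline\Omega)$ by elliptic regularity.
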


\begin{rem}
To best of our knowledge, Theorem  \ref{thm_main_2} is new even in the full data case $\Gamma_1=\Gamma_2=\p\Omega$.
\end{rem}

\begin{rem}
We would like to emphasize that in Theorem \ref{thm_main_2} the open non-empty sets $\Gamma_1, \Gamma_2\subset \p \Omega$ are completely arbitrary. It may be interesting to note that the corresponding partial data inverse problem is still open in dimensions $n\ge 3$ in the linear setting, even for the linear Schr\"odinger equation $-\Delta u+q(x)u=0$ in $\Omega$, say.  In dimension $n=2$ in the linear setting, the global identifiability in the partial data inverse problem is established in  \cite{IUY_2010} when $\Gamma_1=\Gamma_2$ is an arbitrary open non-empty portion of $\p\Omega$,  and in  \cite{IUY_2011} when $\overline{\Gamma_1} \cap \overline{\Gamma_2}=\emptyset$, provided that some additional geometric assumptions are satisfied. We also refer to \cite{Daude_K_N_2019} for examples of  non-uniqueness in the anisotropic  Calder\'on problem when the Dirichlet and Neumann data are measured on disjoint subsets of the boundary in dimensions $n=2,3$.
\end{rem}

\begin{rem} To motivate the consideration of nonlinear elliptic PDE, discussed in this paper, let us mention that semilinear PDE with quadratic gradient terms occur naturally in the study of harmonic maps, harmonic heat flow maps, as well as Schr\"odinger maps, see \cite{Tataru_conf_2011}, \cite{Bejenaru}.
\end{rem}

Following \cite{LLLS_new}, we shall next discuss inverse boundary problems for semilinear elliptic equations with quadratic gradient terms, in the presence of an unknown obstacle. Let $\Omega\subset \R^n$ be a bounded open set with a connected $C^\infty$  boundary, and let
$D\subset\subset \Omega$ be such that $\Omega\setminus\overline{D}$ is connected and $\p D\in C^\infty$. Let us consider the following boundary  problem,
\begin{equation}
\label{eq_ref_2}
\begin{cases}
-\Delta u+ q(x)(\nabla u)^2+ V(x,u)=0 & \text{in}\quad \Omega\setminus \overline{D}, \\
u=0 &  \text{on}\quad \p D, \\
u=f &  \text{on}\quad \p \Omega.
\end{cases}
\end{equation}
An application of Theorem \ref{thm_well-posedness} of Appendix \ref{App_well_posedness}, as before, gives that for all  $f\in B_{\delta} (\p \Omega)$, the problem \eqref{eq_ref_2} has a unique small solution $u\in C^{2,\alpha}(\overline{\Omega}\setminus D)$.  Let $\Gamma_1, \Gamma_2\subset \p \Omega$ be  arbitrary non-empty open subsets of the boundary $\p \Omega$.  We define the associated partial Dirichlet--to--Neumann map $\Lambda^{D, \Gamma_1, \Gamma_2}_{q,V}$ by
\[
\Lambda^{D,\Gamma_1,\Gamma_2}_{q,V}( f)=\p_\nu u|_{\Gamma_2}, \quad f\in B_{\delta} (\p \Omega), \quad \supp(f)\subset\Gamma_1.
\]
We are interested in the inverse problem of determining the unknown obstacle $D$, the coefficient $q$, and the non-linear term $V$, all from the knowledge of the partial Dirichlet--to--Neumann map $\Lambda^{D,\Gamma_1, \Gamma_2}_{q,V}$.

The following result is analogous to  \cite[Theorem 1.2]{LLLS_new}, with the novelty that we allow quadratic gradient terms in the nonlinearity, and that we can  perform measurements on arbitrary open non-empty sets $\Gamma_1, \Gamma_2\subset \p \Omega$.
\begin{thm}
\label{thm_main_3}
Let $\Omega\subset \R^n$, $n\ge 2$, be a bounded open set with connected $C^\infty$ boundary, and let $\Gamma_1, \Gamma_2\subset \p \Omega$ be arbitrary open non-empty subsets of the boundary $\p \Omega$. Let $D_1, D_2\subset\subset \Omega$ be non-empty open subsets with $C^\infty$ boundaries such that $\Omega\setminus \overline{D_j}$ are connected, $j=1,2$. Let $q_j \in C^\alpha(\overline{\Omega}\setminus D_j)$ and  $V^{(j)}: (\overline{\Omega}\setminus D_j)\times \C\to \C$ satisfy the assumptions (i) and (ii), $j=1,2$.  Assume that $\Lambda^{D_1, \Gamma_1, \Gamma_2}_{q_1,V^{(1)}}=\Lambda^{D_2, \Gamma_1, \Gamma_2}_{q_2,V^{(2)}}$. Then $D:=D_1=D_2$, $q_1=q_2$ in $\Omega\setminus \overline{D} $ and $V^{(1)}=V^{(2)}$ in $(\Omega\setminus \overline{D})\times \C$.
\end{thm}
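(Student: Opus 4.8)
The plan is to run the method of higher-order linearizations of the partial Dirichlet--to--Neumann map together with the density result Theorem~\ref{thm_product_gradients}, in the spirit of \cite{LLLS_new} and of Theorem~\ref{thm_main_2}; the genuinely new points are the recovery of the unknown obstacle with arbitrary partial data and the treatment of the quadratic gradient term. I would begin by recording the linearizations. Write $u=u_f$ with $f=\sum_j\varepsilon_j f_j$, where the $f_j$ are smooth, supported in $\Gamma_1$, and $|\varepsilon|$ is small; the well-posedness and analytic dependence on $f$ from Appendix~\ref{App_well_posedness} justify differentiating \eqref{eq_ref_2} in $\varepsilon$ at $\varepsilon=0$. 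The first linearization $v_j:=\partial_{\varepsilon_j}u|_{\varepsilon=0}$ is harmonic in $\Omega\setminus\overline{D}$, vanishes on $\partial D$, and equals $f_j$ on $\partial\Omega$, so the first linearization of $\Lambda^{D,\Gamma_1,\Gamma_2}_{q,V}$ is the partial Dirichlet--to--Neumann map of the Dirichlet Laplacian on $\Omega\setminus\overline{D}$. Since $\partial_z^2V(x,0)=0$ and $V$ has no quadratic term, the second mixed derivative $w^{(jk)}:=\partial_{\varepsilon_j}\partial_{\varepsilon_k}u|_{\varepsilon=0}$ solves $-\Delta w^{(jk)}=-2q\,\nabla v_j\cdot\nabla v_k$ in $\Omega\setminus\overline{D}$ with zero Dirichlet data on $\partial\Omega\cup\partial D$; and for $m\ge 3$ the $m$-th mixed derivative satisfies $-\Delta w^{(1\dots m)}=R_m-V_m\,v_1\cdots v_m$ with zero Dirichlet data on $\partial\Omega\cup\partial D$, where $R_m$ is a universal expression in $q,V_3,\dots,V_{m-1}$ and the lower-order linearizations.

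First I would recover the obstacle, which I expect to be the main difficulty. The identity $\Lambda^{D_1,\Gamma_1,\Gamma_2}_{q_1,V^{(1)}}=\Lambda^{D_2,\Gamma_1,\Gamma_2}_{q_2,V^{(2)}}$, restricted to first linearizations, shows that for every smooth $f$ supported in $\Gamma_1$ the harmonic functions $v^{(1)}$ on $\Omega\setminus\overline{D_1}$ and $v^{(2)}$ on $\Omega\setminus\overline{D_2}$ — each equal to $f$ on $\partial\Omega$ and to $0$ on $\partial D_j$ — have the same Cauchy data on $\Gamma_2$. Hence $v^{(1)}-v^{(2)}$ has vanishing Cauchy data on the open set $\Gamma_2\subset\partial\Omega$ and is harmonic in the connected component $\mathcal{O}$ of $\Omega\setminus(\overline{D_1}\cup\overline{D_2})$ whose closure contains $\partial\Omega$ (a collar argument shows this component is well defined, since $\partial\Omega$ is connected and $D_1,D_2\subset\subset\Omega$); by unique continuation, $v^{(1)}=v^{(2)}$ in $\mathcal{O}$. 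A connectedness argument as in \cite{LLLS_new} then forces $D_1=D_2$: if not, one exhibits a point $x_0$ on the boundary of one obstacle, say $x_0\in\partial D_1$, with $x_0\notin\overline{D_2}$ and $x_0\in\overline{\mathcal{O}}$; then $v^{(2)}$ is harmonic near $x_0$ and equals $v^{(1)}$ on $\mathcal{O}$, so $v^{(2)}(x_0)=v^{(1)}(x_0)=0$ for every smooth $f$ supported in $\Gamma_1$, contradicting the strict positivity at $x_0$ of the Poisson kernel of the connected domain $\Omega\setminus\overline{D_2}$ (with the homogeneous Dirichlet condition on $\partial D_2$), which follows from the maximum principle and Harnack's inequality. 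The delicate part is precisely this topological bookkeeping: the Cauchy data is controlled only on $\Gamma_2\subsetneq\partial\Omega$, and $\Omega\setminus(\overline{D_1}\cup\overline{D_2})$ need not be connected. Put $D:=D_1=D_2$ and $\tilde\Gamma_j:=\partial\Omega\setminus\Gamma_j$.

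Next I would recover $q$. As the obstacles coincide, the first linearizations agree and the $v_j$ are common to both problems. Fix once and for all a nontrivial harmonic $\phi_0\in C^\infty(\overline{\Omega}\setminus D)$ vanishing on $\partial D\cup\tilde\Gamma_2$ (take the harmonic function equal to $0$ on $\partial D$ and to a nonzero smooth function supported in $\Gamma_2$ on $\partial\Omega$). Setting $W:=w^{(jk)}_{q_1,V^{(1)}}-w^{(jk)}_{q_2,V^{(2)}}$, we get $-\Delta W=-2(q_1-q_2)\,\nabla v_j\cdot\nabla v_k$ in $\Omega\setminus\overline{D}$, $W=0$ on $\partial\Omega\cup\partial D$, and $\partial_\nu W=0$ on $\Gamma_2$ from the equality of the Dirichlet--to--Neumann maps. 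Green's formula on $\Omega\setminus\overline{D}$ against $\phi_0$ annihilates every boundary term — on $\partial D$ and $\tilde\Gamma_2$ because $\phi_0$ vanishes there, on $\Gamma_2$ because $W$ and $\partial_\nu W$ vanish there — and yields
\[
\int_{\Omega\setminus\overline{D}}(q_1-q_2)\,\phi_0\,\nabla v_j\cdot\nabla v_k\,dx=0
\]
for all harmonic $v_j,v_k\in C^\infty(\overline{\Omega}\setminus D)$ with boundary data supported in $\Gamma_1$. By Theorem~\ref{thm_product_gradients}, applied to the domain $\Omega\setminus\overline{D}$ (boundary $\partial\Omega\cup\partial D$, boundary portion $\Gamma_1$, so that the harmonic functions vanish on $\tilde\Gamma_1\cup\partial D$) — and noting that the construction in its proof provides such functions with boundary data compactly supported in $\Gamma_1$ — the linear span of the scalar products $\nabla v_j\cdot\nabla v_k$ is dense in $L^1(\Omega\setminus\overline{D})$. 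Since $(q_1-q_2)\phi_0\in C^\alpha\subset L^\infty$, it follows that $(q_1-q_2)\phi_0=0$ almost everywhere; as $\phi_0$ is real-analytic and not identically zero, its zero set has measure zero, whence $q_1=q_2$.

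Finally I would recover $V$ by induction on $m\ge 3$. With $q:=q_1=q_2$ known, $w^{(jk)}$ is common to both problems; assuming $V_3^{(1)}=V_3^{(2)},\dots,V_{m-1}^{(1)}=V_{m-1}^{(2)}$, the term $R_m$ is also common to both, so $W:=w^{(1\dots m)}_{(1)}-w^{(1\dots m)}_{(2)}$ satisfies $-\Delta W=-(V_m^{(1)}-V_m^{(2)})\,v_1\cdots v_m$ with zero Dirichlet data on $\partial\Omega\cup\partial D$ and $\partial_\nu W=0$ on $\Gamma_2$. The same Green's formula against $\phi_0$ gives $\int_{\Omega\setminus\overline{D}}(V_m^{(1)}-V_m^{(2)})\,\phi_0\,v_1\cdots v_m\,dx=0$; fixing a nontrivial harmonic $\psi_0\in C^\infty(\overline{\Omega}\setminus D)$ vanishing on $\tilde\Gamma_1\cup\partial D$ and choosing $v_3=\cdots=v_m=\psi_0$, this becomes $\int_{\Omega\setminus\overline{D}}(V_m^{(1)}-V_m^{(2)})\,\phi_0\psi_0^{\,m-2}\,v_1v_2\,dx=0$ for all harmonic $v_1,v_2$ with boundary data supported in $\Gamma_1$. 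By the corresponding density in $L^1(\Omega\setminus\overline{D})$ of the linear span of products $v_1v_2$ of harmonic functions vanishing on $\tilde\Gamma_1\cup\partial D$ — the result of \cite{DKSU} applied on $\Omega\setminus\overline{D}$ — we obtain $(V_m^{(1)}-V_m^{(2)})\phi_0\psi_0^{\,m-2}=0$ almost everywhere, and hence $V_m^{(1)}=V_m^{(2)}$ as above. Summing the series \eqref{eq_V} yields $V^{(1)}=V^{(2)}$ on $(\Omega\setminus\overline{D})\times\C$, which completes the proof.
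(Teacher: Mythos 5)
Your proposal is correct and proceeds along essentially the same lines as the paper: first-order linearization plus unique continuation, the Isakov-type path argument, and a strong maximum principle argument to recover the obstacle $D$, followed by higher-order linearizations with Theorem~\ref{thm_product_gradients} and the density result of \cite{DKSU} to recover $q$ and $V$. The only differences are cosmetic — once $D_1=D_2$ the paper simply cites Theorem~\ref{thm_main_2} applied on $\Omega\setminus\overline{D}$ instead of re-deriving the $q,V$ steps as you do, and your strict-positivity-of-the-Poisson-kernel contradiction is the same minimum-principle argument the paper spells out.
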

\begin{rem}
It may be interesting to note that the simultaneous recovery of an obstacle and surrounding potentials in the linear setting, say in the case of the linear Schr\"odinger equation,   constitutes an open problem, see \cite{Isakov_2009},  \cite{LLLS_new} for a discussion.
\end{rem}

Let us remark that inverse boundary problems for nonlinear elliptic PDE have been  studied extensively in the literature. To the best of our knowledge, the following main types of nonlinear scalar equations have been considered, under suitable assumptions on the nonlinearity:
\begin{itemize}

\item[(i)]  $-\Delta u+ a(x,u)=0$, see \cite{IsaSyl_94}, \cite{IsaNach_1995}, \cite{Sun_2010}  for the full data problem in the Euclidean case, and \cite{Feizmohammadi_Oksanen}, \cite{LLLS} for  the manifold case, \cite{IY_2013} for the partial data problem in the $n=2$ case, and \cite{KU_2019},  \cite{LLLS_new}  for the partial data problem when $n \geq 2$,

\item[(ii)] $-\Delta u+b(u,\nabla u)=0$, see \cite{Isakov_2001} for the partial data problem in the case $n=3$,

\item[(iii)] $-\Delta u+q(x,\nabla u)=0$,  see \cite{Sun_2004} for the full data problem when $n=2$,

\item[(iv)] $\nabla \cdot (\gamma(x,u)\nabla u)=0$, see \cite{Sun_96}, \cite{Sun_Uhlm_97} for the full data problem in the case $n\ge 2$,

\item[(v)] $\nabla\cdot (\overrightarrow{C}(x,\nabla u))=0$, see \cite{CNV_2019},  \cite{Kang_Nak_02}, \cite{Hervas_Sun} for the full data problem,

\item[(vi)] $\nabla \cdot (c(u,\nabla u)\nabla u)=0$, see  \cite{Mun_Uhl_2018} for the full data problem when $n\ge 2$.
\end{itemize}

A classical method for attacking inverse boundary problems for nonlinear elliptic PDE, going back to \cite{Isakov_93}, consists of performing a first order linearization of the given nonlinear Dirichlet-to-Neumann map, allowing one to reduce the inverse problem to an inverse boundary problem for a linear elliptic equation, and to employ the available results in this case.  A second order linearization of the nonlinear Dirichlet--to--Neumann map has also been successfully exploited in the works \cite{Assylbekov_Zhou},   \cite{CNV_2019}, \cite{Kang_Nak_02}, \cite{Sun_96}, and \cite{Sun_Uhlm_97}. The recent works \cite{Feizmohammadi_Oksanen}, \cite{LLLS} have introduced a natural and powerful method of higher order linearizations of the nonlinear Dirichlet-to-Neumann map for inverse boundary problems for elliptic PDE, allowing one to solve such problems for nonlinear equations in situations where the corresponding inverse problems in the linear setting are open. This development of inverse boundary problems for nonlinear elliptic PDE was preceded by the pioneering work \cite{KLU18} for inverse problems for nonlinear hyperbolic PDE,  see also \cite{CLOP}, \cite{LUW}, and the references given there.

The problem of determining an unknown obstacle is of central significance in inverse scattering. The first uniqueness result for this problem goes back to Schiffer and  Lax and Phillips \cite[p. 173]{Lax_Phillips_book}.  We refer to the works \cite{Isakov_90}, \cite{Kirsch_Kress_93}, \cite{Kirsch--Paiv_1998} for some other significant contributions,  and to  \cite{Isakov_2009} for a review.

Let us now describe the main ideas of the proofs of Theorem \ref{thm_product_gradients}, Theorem \ref{thm_main_2}, and Theorem \ref{thm_main_3}. First, the proof of Theorem \ref{thm_product_gradients} proceeds similarly to \cite{DKSU}, with the only essential difference being that a certain Runge type approximation theorem needed here has to be established with respect to the $H^1$--topology, as opposed to an $L^2$--approximation result obtained in \cite{DKSU} .

The proof of Theorem \ref{thm_main_2} proceeds by the method of higher order linearizations, with Theorem  \ref{thm_product_gradients} and the main result of \cite{DKSU} being the crucial ingredients.

As for Theorem \ref{thm_main_3}, it is an immediate consequence of Theorem \ref{thm_main_2}, once the obstacle has been recovered. Following \cite{LLLS_new}, the determination of the obstacle is obtained by performing a first order linearization of the problem \eqref{eq_ref_2}, and relying on
a standard contradiction argument. 

The paper is organized as follows. In Section \ref{sec_density} we establish Theorem \ref{thm_product_gradients}. The proof of Theorem \ref{thm_main_2} occupies Section \ref{sec_partial_data}.  Theorem  \ref{thm_main_3}  is proven in Section \ref{sec_proof_obstacle}. In Appendix \ref{App_well_posedness} we show  the well-posedness of the Dirichlet problem for our semilinear elliptic equation with quadratic gradient terms, in the case of small boundary data.

\section{Proof of Theorem \ref{thm_product_gradients}}
\label{sec_density}
We shall follow the strategy of the work \cite{DKSU}. Let $f\in L^\infty(\Omega)$ be such that
\begin{equation}
\label{eq_3_1}
\int_\Omega f \nabla u\cdot \nabla vdx=0,
\end{equation}
for any harmonic functions $u,v\in C^\infty(\overline{\Omega})$  satisfying $u|_{\tilde \Gamma}=v|_{\tilde \Gamma}=0$. In view of the Hahn--Banach theorem, we have to  show that  $f=0$ in $\Omega$. This global statement will be obtained as a corollary of the following local result.

\begin{prop}
\label{prop_local}
Let $\Omega\subset \R^n$, $n\ge 2$, be a bounded open set with $C^\infty$ boundary, let $x_0\in \p \Omega$, and let $\tilde \Gamma\subset \p \Omega$ be the complement of an open boundary neighborhood of $x_0$. Then there exists $\delta>0$ such that if we have \eqref{eq_3_1}  for any harmonic functions $u,v\in C^\infty(\overline{\Omega})$  satisfying $u|_{\tilde \Gamma}=v|_{\tilde \Gamma}=0$, then $f=0$ in $B(x_0, \delta)\cap \Omega$.
\end{prop}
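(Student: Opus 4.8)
The plan is to follow the strategy of \cite{DKSU} essentially line by line, the only substantive change being that the Runge-type approximation theorem at its heart must now be established in the $H^1$-topology rather than in $L^2$, since we are dealing with gradients of harmonic functions.

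First I would normalize. After translating and rotating, take $x_0=0$, pick $\delta>0$ so small that $\overline{B(0,8\delta)}\cap\p\Omega$ lies in the open boundary neighborhood of $x_0$ whose complement is $\tilde\Gamma$ (so that $\tilde\Gamma$ stays at a fixed positive distance from $0$), and flatten $\p\Omega$ near $0$. The relevant limiting Carleman weight is the logarithmic one, $\varphi(x)=\log|x-p|$, with $p$ a point just outside $\overline\Omega$ very close to $0$; for large $\tau$ the factor $e^{-\tau\varphi}$ then concentrates near $0$ inside $\overline\Omega$ and, compared with its size there, is exponentially small on $\tilde\Gamma$. Let $\psi$ be the function associated with $\varphi$ by the eikonal relations $|\nabla\psi|=|\nabla\varphi|$, $\nabla\varphi\cdot\nabla\psi=0$. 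I would then construct, for $\tau\gg1$, harmonic functions $u_\tau,v_\tau\in C^\infty(\overline\Omega)$ with $u_\tau|_{\tilde\Gamma}=v_\tau|_{\tilde\Gamma}=0$ of complex-geometric-optics type,
\[
u_\tau=e^{-\tau(\varphi-i\psi)}(a+r_\tau),\qquad v_\tau=e^{-\tau(\varphi+i\psi)}(b+\tilde r_\tau),
\]
with $a,b\in C^\infty$ fixed amplitudes solving the relevant transport equations and $r_\tau,\tilde r_\tau$ corrections satisfying $\|r_\tau\|_{H^1(\Omega)}+\|\tilde r_\tau\|_{H^1(\Omega)}=o(1)$ after the natural normalization. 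The structural point that makes the gradient version work is that, since the complex phases of $u_\tau$ and $v_\tau$ are conjugate to one another, the leading term of $\nabla u_\tau\cdot\nabla v_\tau$ equals a nonzero constant times $\tau^2|\nabla\varphi|^2\,ab\,e^{-2\tau\varphi}$; it does not vanish, in contrast to what would happen if the two solutions used the same phase, when the eikonal identity $\nabla(\varphi-i\psi)\cdot\nabla(\varphi-i\psi)=0$ would kill the $\tau^2$ term. Building $u_\tau,v_\tau$ with the homogeneous condition on $\tilde\Gamma$ incorporated is exactly where a Runge approximation enters: one first constructs the CGO functions on an auxiliary neighborhood of $\overline{B(0,2\delta)\cap\Omega}$, then approximates their restriction to $B(0,\delta)\cap\Omega$ in $H^1$-norm by harmonic functions on $\Omega$ vanishing on $\tilde\Gamma$.

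This Runge approximation is the step I expect to be the main obstacle. By the Hahn--Banach theorem it amounts to showing that an $H^{-1}(\Omega)$ functional annihilating every harmonic function on $\Omega$ which vanishes on $\tilde\Gamma$ must annihilate the restriction of each such CGO function: solving the self-adjoint Dirichlet problem with that functional as source produces a function whose Cauchy data vanish along $\Gamma$, which then vanishes near $x_0$ by unique continuation, giving the claim. Carrying this out requires redoing the $L^2$-duality of \cite{DKSU} in the $H^1$-setting, and checking that the standard Carleman estimate for the conjugated Laplacian yields the corrections $r_\tau,\tilde r_\tau$ with a one-derivative gain; alongside it one has to arrange the normalization so that the $H^1$-error of the approximation and the contribution of $\Omega\setminus B(0,\delta)$ are both negligible against the $\tau^2$ growth.

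Granting this, I would substitute $u=u_\tau$, $v=v_\tau$ into \eqref{eq_3_1}, divide by $\tau^2$, and let $\tau\to\infty$: using $f\in L^\infty(\Omega)$ and the smallness just mentioned, only the leading term survives, leaving a constant multiple of $\int_\Omega f\,|\nabla\varphi|^2ab\,e^{-2\tau\varphi}\,dx$, and a Laplace/stationary-phase analysis as $\tau\to\infty$ — refined by the oscillatory Fourier-inversion dressing of the amplitudes used in \cite{DKSU} to pass from a pointwise conclusion to a full neighborhood — forces $f=0$ in $B(0,\delta)\cap\Omega$. The remaining ingredients (the Carleman estimate, the transport equations for $a,b$, and the unique-continuation input) are standard.
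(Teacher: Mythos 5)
Your proposal takes a genuinely different route from the paper, and I do not think it closes as stated. The paper does not prove Proposition~\ref{prop_local} by CGO solutions with the logarithmic Carleman weight; instead it conformally normalizes so that $\Omega\subset\{|x+e_1|<1\}$ and $\tilde\Gamma\subset\{x_1\le -2c\}$, and then uses harmonic \emph{plane waves} $e^{-ix\cdot\zeta/h}$ with $\zeta\in p^{-1}(0)=\{\zeta^2=0\}$. To make these vanish on $\tilde\Gamma$ one adds a correction $w$ obtained by solving a Dirichlet problem with boundary data $-\chi\,e^{-ix\cdot\zeta/h}|_{\p\Omega}$, where $\chi$ is a cutoff supported in $\{x_1\le -c\}$; the crucial $H^1$ bound \eqref{eq_4_2} on $w$ comes for free from the exponential smallness of $e^{-ix\cdot\zeta/h}$ on $\supp\chi$ when $\mathrm{Im}\,\zeta_1\ge 0$. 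No Runge approximation is needed or used in the proof of the local proposition; the Runge lemma (Lemma~\ref{lem_Runge}, in $H^1$) is the tool for the \emph{local-to-global} step in Subsection~\ref{subsection_global}, which is a different part of the argument. The identity \eqref{eq_3_1} is then fed into the Segre-type decomposition $z=\zeta+\eta$ with $\zeta,\eta\in p^{-1}(0)$ and $\mathrm{Im}\,\zeta_1,\mathrm{Im}\,\eta_1>0$, giving the estimate \eqref{eq_4_8} on the Fourier–Laplace transform of $f$; the gradients contribute the factor $\zeta\cdot\eta$, and the null-direction decomposition at $z=2iae_1$ ensures $|\zeta\cdot\eta|\ge a^2$, which is the precise analogue (and replaces) your observation that conjugate phases keep the leading term from cancelling. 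From there DKSU's FBI-transform/analytic-wavefront argument finishes verbatim.

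The gap in your version is the way Runge approximation is used inside the local argument. The hypothesis \eqref{eq_3_1} is an integral over all of $\Omega$, not over $B(x_0,\delta)\cap\Omega$. If you approximate $u_\tau|_{B(x_0,\delta)\cap\Omega}$ in $H^1$ by harmonic functions $\tilde u_\tau$ on $\Omega$ vanishing on $\tilde\Gamma$, you get $\int_\Omega f\,\nabla\tilde u_\tau\cdot\nabla\tilde v_\tau=0$, but you have no control whatsoever on $\tilde u_\tau,\tilde v_\tau$ on $\Omega\setminus B(x_0,\delta)$, so you cannot pass to $\int_{B\cap\Omega} f\,\nabla u_\tau\cdot\nabla v_\tau$. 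Moreover the Runge theorem is purely qualitative; to survive the $\tau\to\infty$ limit you would need the approximation error to be $o(\tau^{-2})$ uniformly after the normalization, and nothing in the Hahn--Banach/duality proof delivers a rate. The paper avoids both difficulties by constructing the boundary-vanishing solutions \emph{directly} on $\Omega$ (plane wave plus Dirichlet correction), so that \eqref{eq_3_1} applies exactly, and by keeping the Runge step for the separate task of propagating the vanishing along a curve from the boundary into the interior. If you want to salvage the CGO route you would have to build the solutions with the homogeneous condition on $\tilde\Gamma$ incorporated by a Dirichlet correction rather than by Runge, at which point you are essentially back to the paper's argument after a conformal change of variables.
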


Proposition \ref{prop_local} will be proved in Subsection  \ref{subsection_local}.   The passage from local to global results will be carried out in Subsection \ref{subsection_global}. Here an essential ingredient is a  Runge type approximation theorem in the $H^1$--topology, established in Subsection \ref{subsection_density_gradients} and extending \cite[Lemma 2.2]{DKSU}, where approximation in the $L^2$--sense was shown.

\subsection{Runge type approximation}

\label{subsection_density_gradients} Let $\Omega\subset \R^n$, $n\ge 2$, be a bounded open set with $C^\infty$ boundary,   and let us consider the $L^2$--dual of $H^1(\Omega)$, given by
\[
\tilde H^{-1}(\Omega):=\{v\in H^{-1}(\R^n): \supp(v)\subset \overline{\Omega}\},
\]
see  \cite{Eskin_book}, \cite{McLean_book}.
Here the duality pairing is defined as follows: if  $v\in \tilde H^{-1}(\Omega)$ and $w\in H^1(\Omega)$, then we set
\begin{equation}
\label{eq_2_1}
(v, w)_{\tilde H^{-1}(\Omega), H^1(\Omega)}= (v, \text{Ext}(w))_{H^{-1}(\R^n), H^1(\R^n)},
\end{equation}
where $\text{Ext}(w)\in H^1(\R^n)$ is an arbitrary extension of $w$, and $(\cdot, \cdot)_{H^{-1}(\R^n), H^1(\R^n)}$ is the extension of $L^2$ scalar product $(\varphi, \psi)_{L^2(\R^n)}=\int_{\R^n} \varphi(x)\overline{\psi(x)}dx$.
Note that the definition \eqref{eq_2_1} is independent of the choice of an extension $\text{Ext}(w)$, see \cite[Lemma 22.7]{Eskin_book}.

Let $\Omega_1\subset \Omega_2\subset \R^n$, $n\ge 2$,  be two bounded open sets with smooth boundaries such that
$\Omega_2\setminus\overline{\Omega_1}\ne\emptyset$. Assume that $\p \Omega_1\cap \p \Omega_2=\overline{U}$ where $U\subset \p \Omega_1$ is open with $C^\infty$ boundary.  Associated to $\Omega_2$, we let $\mathcal{G}: C^\infty(\overline{\Omega_2})\to C^\infty(\overline{\Omega_2})$, $a\mapsto w$, be the solution operator to the Dirichlet problem,
\[
\begin{cases} -\Delta w=a & \text{in}\quad \Omega_2,\\
w|_{\p \Omega_2}=0.
\end{cases}
\]
The following result is an extension of \cite[Lemma 2.2]{DKSU}.

\begin{lem}
\label{lem_Runge}
 The space
\[
W:=\{\mathcal{G} a|_{\Omega_1}: a\in C^\infty(\overline{\Omega_2}),\ \emph{\supp}(a)\subset \Omega_2\setminus\overline{\Omega_1}\}
\]
is dense in the space
\[
S:=\{u\in C^\infty(\overline{\Omega_1}): -\Delta u=0 \text{ in }\Omega_1, \ u|_{\p \Omega_1\cap \p \Omega_2}=0\},
\]
with respect to  the $H^1(\Omega_1)$--topology.
\end{lem}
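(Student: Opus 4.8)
The plan is to argue by duality with respect to the $H^1(\Omega_1)$ pairing. Suppose $F \in \tilde H^{-1}(\Omega_1)$ annihilates $W$; I must show that $F$ annihilates every $u \in S$. Given $a \in C^\infty(\overline{\Omega_2})$ supported in $\Omega_2 \setminus \overline{\Omega_1}$, write $w = \mathcal{G}a \in C^\infty(\overline{\Omega_2})$, so $-\Delta w = a$ in $\Omega_2$ and $w|_{\p\Omega_2} = 0$. The vanishing hypothesis is $(F, w|_{\Omega_1})_{\tilde H^{-1}(\Omega_1), H^1(\Omega_1)} = 0$ for all such $a$. To exploit this, I would introduce the solution $v \in H^1_0(\Omega_2)$ of the transmission-type problem $-\Delta v = F$ in $\Omega_2$ (extending $F$ by zero outside $\overline{\Omega_1}$, which makes sense since $F \in \tilde H^{-1}(\Omega_1) \subset H^{-1}(\Omega_2)$), with $v|_{\p\Omega_2}=0$. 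Then for $a$ as above, integrating by parts twice on $\Omega_2$ and using $w|_{\p\Omega_2} = v|_{\p\Omega_2} = 0$,
\[
0 = (F, w)_{H^{-1}(\Omega_2), H^1(\Omega_2)} = \int_{\Omega_2} \nabla v \cdot \nabla w \, dx = (v, a)_{L^2(\Omega_2)} = \int_{\Omega_2 \setminus \overline{\Omega_1}} v \, a \, dx.
\]
Since $a$ ranges over all smooth functions compactly supported in the open set $\Omega_2 \setminus \overline{\Omega_1}$, this forces $v = 0$ on $\Omega_2 \setminus \overline{\Omega_1}$. Consequently $v$, together with its normal derivative, vanishes on the part of $\p\Omega_1$ lying in $\Omega_2$, i.e. on $\p\Omega_1 \setminus \overline{U}$; and since $v \in H^1_0(\Omega_2)$ vanishes on $\p\Omega_2 \supset \overline{U}$, in fact $v|_{\p\Omega_1} = 0$ in the trace sense, while $-\Delta v = F$ inside $\Omega_1$.

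Now let $u \in S$ be harmonic in $\Omega_1$ with $u|_U = 0$ (here $U = \p\Omega_1 \cap \p\Omega_2$). I want to compute $(F, u)_{\tilde H^{-1}(\Omega_1), H^1(\Omega_1)}$. Using $-\Delta v = F$ on $\Omega_1$ and Green's formula on $\Omega_1$,
\[
(F, u)_{\tilde H^{-1}(\Omega_1), H^1(\Omega_1)} = \int_{\Omega_1} \nabla v \cdot \nabla u \, dx - \int_{\p\Omega_1} (\p_\nu v)\, u \, dS,
\]
and the boundary integral splits over $U$ and $\p\Omega_1 \setminus \overline{U}$: on $U$ we have $u = 0$, and on $\p\Omega_1 \setminus \overline{U}$ we have $\p_\nu v = 0$ (from $v \equiv 0$ near there). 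Meanwhile $\int_{\Omega_1} \nabla v \cdot \nabla u \, dx = -\int_{\Omega_1} v\, \Delta u\, dx + \int_{\p\Omega_1} v\, \p_\nu u\, dS = 0$ since $u$ is harmonic and $v|_{\p\Omega_1} = 0$. Hence $(F,u) = 0$, which is exactly what the Hahn--Banach argument needs, and the density follows. (Some care is needed because $S$ and $W$ consist of smooth functions but $\tilde H^{-1}(\Omega_1)$-duality is the natural one; one takes the $H^1(\Omega_1)$-closures and uses that any $H^1$-bounded functional on $\Omega_1$ is represented by such an $F$ — this is the point where one invokes \cite[Lemma 22.7]{Eskin_book} and standard density of $C^\infty(\overline{\Omega_1})$ in $H^1(\Omega_1)$.)

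The main obstacle is the regularity/geometry at the interface $\overline{U} = \p\Omega_1 \cap \p\Omega_2$, where the two boundaries meet: one must justify that the solution $v$ of $-\Delta v = F$ on $\Omega_2$ really does have enough boundary regularity on $\p\Omega_1$ for the Green's formula manipulations, and that "$v = 0$ on $\Omega_2 \setminus \overline{\Omega_1}$" propagates to $v|_{\p\Omega_1} = 0$ and $\p_\nu v = 0$ on $\p\Omega_1 \setminus \overline{U}$ in a rigorous trace sense — ideally by a density argument reducing to smooth $F$ (via mollification of $F$ inside $\Omega_1$), for which interior elliptic regularity away from $\p\Omega_2$ and standard boundary regularity near the smooth portions of $\p\Omega_1$ suffice, so that no genuine difficulty arises at the corner locus $\overline{U}$ itself. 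This is precisely the step that was handled at the level of $L^2$-approximation in \cite[Lemma 2.2]{DKSU}; the only new ingredient here is carrying the same scheme through in the $H^1$-topology, which amounts to pairing against $\tilde H^{-1}(\Omega_1)$ rather than $L^2(\Omega_1)$ and is otherwise formally identical.
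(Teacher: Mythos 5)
Your proof proposal follows the same duality strategy as the paper: take $F\in \tilde H^{-1}(\Omega_1)$ annihilating $W$, solve $-\Delta v = F$ in $\Omega_2$ with $v\in H^1_0(\Omega_2)$, deduce $v\equiv 0$ on $\Omega_2\setminus\overline{\Omega_1}$ from the orthogonality, and then pair $F$ against $u\in S$ using the fact that $v\in H^1_0(\Omega_1)$. The first half of your argument — establishing $(F, \mathcal{G}a) = \int_{\Omega_2} v\,a\,dx$ and hence $v=0$ outside $\overline{\Omega_1}$ — is essentially identical to the paper's (with $f$ and $v$ in the paper's notation playing the roles of your $v$ and $F$), modulo some density approximations the paper carries out explicitly.

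The genuine gap is in your final step, where you write Green's formula on $\Omega_1$ as $(F,u)=\int_{\Omega_1}\nabla v\cdot\nabla u - \int_{\p\Omega_1}(\p_\nu v)\,u\,dS$ and claim $\p_\nu v = 0$ on $\p\Omega_1\setminus\overline{U}$ ``from $v\equiv 0$ near there.'' As stated this is wrong: the classical interior one-sided normal derivative of $v$ on $\p\Omega_1\setminus\overline{U}$ need \emph{not} vanish. Indeed, take a model situation where $F$ carries a surface density on $\p\Omega_1\setminus\overline{U}$ (e.g.\ $v = x_1^-\,\eta(x')$ locally near $\{x_1=0\}\subset\p\Omega_1\cap\Omega_2$, so $F=-\Delta v$ contains $-\eta(x')\delta(x_1)$); then $v\equiv 0$ for $x_1>0$, but $\p_{\nu}v|_{x_1=0^-}=-\eta\neq 0$. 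What is actually true — and what makes your formula correct after all — is that the \emph{weak} (distributional) conormal derivative, defined as the unique $h\in H^{-1/2}(\p\Omega_1)$ with $\langle h, w|_{\p\Omega_1}\rangle = \int_{\Omega_1}\nabla v\cdot\nabla w - (F,w)_{\tilde H^{-1}(\Omega_1),H^1(\Omega_1)}$ for $w\in H^1(\Omega_1)$, has support in $\overline{U}$. This does not follow from $v\equiv 0$ outside $\overline{\Omega_1}$ alone; it also uses that the equation $-\Delta v = F$ holds distributionally \emph{across} $\p\Omega_1\setminus\overline{U}$ inside $\Omega_2$. The paper packages this cleanly by considering $g=-\Delta\tilde v - F\in H^{-1}(\R^n)$ (with $\tilde v$ the zero extension of $v$), observing $\supp g\subset\overline{U}=\p\Omega_1\cap\p\Omega_2$, and identifying $g = h\otimes\delta_{\p\Omega_1}$ with $h\in H^{-1/2}(\p\Omega_1)$.

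Even once $\supp h\subset\overline{U}$ is established, the vanishing of $\langle h, u|_{\p\Omega_1}\rangle$ is \emph{not} automatic from ``$h$ supported in $\overline{U}$ and $u=0$ on $\overline{U}$'': at the $H^{-1/2}/H^{1/2}$ level this kind of cancellation requires an approximation argument, since (as a cautionary example) a distribution like $\delta'$ is supported at a point but pairs nontrivially with functions vanishing there. This is exactly where the hypothesis that $U$ has $C^\infty$ boundary enters: it guarantees that $C^\infty_0(U)$ is dense in $\{h\in H^{-1/2}(\p\Omega_1): \supp h\subset\overline{U}\}$ (McLean, Thm.\ 3.29), so one can pass to the limit $\lim_j\int_U h_j\,\bar u\,dS = 0$. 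Your closing paragraph waves at ``mollifying $F$'' and asserts ``no genuine difficulty arises at the corner locus $\overline{U}$,'' which underestimates precisely the point: the whole payload of the lemma, compared to the $L^2$ version of \cite{DKSU}, is getting this $H^{-1/2}$ boundary pairing under control.
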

\begin{proof}

We shall use some ideas of  \cite{LLS_poisson}, see also  \cite{Browder}, \cite{LLS_conformal}.  Let $v\in \tilde H^{-1}(\Omega_1)$ be such that
\begin{equation}
\label{eq_1_1}
(v, \mathcal{G}a|_{\Omega_1})_{\tilde H^{-1}(\Omega_1), H^1(\Omega_1)}= 0
\end{equation}
 for any $a\in C^\infty(\overline{\Omega_2})$, $\supp(a)\subset \Omega_2\setminus\overline{\Omega_1}$.   By the Hahn--Banach theorem, it suffices to show that $(v, u)_{\tilde H^{-1}(\Omega_1), H^1(\Omega_1)}=0$ for any $u\in S$.

We have  $\mathcal{G}a\in H^1_0(\Omega_2)$ and let us view $\mathcal{G}a$ as an element of  $H^1(\R^n)$ via an extension by 0 to $\R^n\setminus \Omega_2$. Then there exists a sequence $\varphi_j\in C^\infty_0(\Omega_2)$ such that $\varphi_j\to \mathcal{G}a$ in $H^1(\R^n)$. It follows from \eqref{eq_1_1}  that
\begin{equation}
\label{eq_1_2}
\begin{aligned}
0=(v, \mathcal{G}a)_{H^{-1}(\R^n), H^1(\R^n)}= \lim_{j\to \infty} (v, \varphi_j)_{H^{-1}(\R^n), H^1(\R^n)}\\
=\lim_{j\to \infty} (v, \varphi_j)_{H^{-1}(\Omega_2),H^1_0(\Omega_2)}=
(v, \mathcal{G}a)_{H^{-1}(\Omega_2),H^1_0(\Omega_2)}.
\end{aligned}
\end{equation}

As $v\in \tilde H^{-1}(\Omega_1)$, by \cite[Theorem 3.29]{McLean_book}, there is a sequence $v_j\in C^\infty_0(\Omega_1)$ such that $v_j\to v$ in $H^{-1}(\R^n)$. Let $f\in H^1_0(\Omega_2)$ and $f_j\in C^\infty(\overline{\Omega_2})\cap H^1_0(\Omega_2)$ be the unique solutions to the following Dirichlet problems,
\begin{equation}
\label{eq_1_3}
\begin{cases}
-\Delta f=v & \text{in}\quad \Omega_2,\\
f=0 & \text{on}\quad \p \Omega_2,
\end{cases}\quad \begin{cases}
-\Delta f_j=v_j & \text{in}\quad \Omega_2,\\
f_j=0 & \text{on}\quad \p \Omega_2.
\end{cases}
\end{equation}
Now it follows from \eqref{eq_1_2}, \eqref{eq_1_3} that
\begin{equation}
\label{eq_1_4}
\begin{aligned}
0&=(v, \mathcal{G}a)_{H^{-1}(\Omega_2),H^1_0(\Omega_2)}=\lim_{j\to \infty} (v_j, \mathcal{G}a)_{H^{-1}(\Omega_2),H^1_0(\Omega_2)}\\
 &=\lim_{j\to \infty} (-\Delta f_j, \mathcal{G}a)_{H^{-1}(\Omega_2),H^1_0(\Omega_2)}
 = \lim_{j\to \infty}\int_{\Omega_2} (-\Delta f_j) \overline{\mathcal{G}a} dx\\
 &=\lim_{j\to \infty}\int_{\Omega_2}  f_j \overline{a} dx= \int_{\Omega_2}f\overline{a}dx.
\end{aligned}
\end{equation}
Here in the penultimate equality we use Green's formula,  and the fact that $f_j|_{\p \Omega_2}=\mathcal{G}a|_{\p \Omega_2}=0$. In the last equality in \eqref{eq_1_4} we use that $f_j\to f$ in $L^2(\Omega)$ by the well--posedness of the Dirichlet problem for $-\Delta$ in $\Omega_2$, see \cite[Theorem 23.4]{Eskin_book}. As $a\in C^\infty(\overline{\Omega_2})$, $\supp(a)\subset \Omega_2\setminus\overline{\Omega_1}$, is arbitrary, we get from \eqref{eq_1_4} that $f=0$ in $\Omega_2\setminus \overline{\Omega_1}$.  Since $f\in H^1_0(\Omega_2)$, we see that $f|_{\p \Omega_1\setminus\p \Omega_2}=0$, and therefore, $f|_{\p \Omega_1}=0$. Hence, $f\in H^1_0(\Omega_1)$, and we shall view $f$ as an element of $H^1(\R^n)$ via an extension by $0$ to $\R^n\setminus \Omega_1$.

Let  $\hat f_j\in C^\infty_0(\Omega_1)$ be such that $\hat f_j\to f$ in $H^1(\R^n)$. Thus,  $-\Delta\hat f_j\to -\Delta f $ in $H^{-1}(\R^n)$.  Let $u\in S$ and let $\text{Ext}(u)\in H^1(\R^n)$ be an extension of $u$. Then integrating by parts, we have
\begin{equation}
\label{eq_1_5}
\begin{aligned}
 (-\Delta f, \text{Ext}(u))_{H^{-1}(\R^n), H^1(\R^n)}
&=\lim_{j\to \infty}((-\Delta \hat f_j), \text{Ext}(u))_{H^{-1}(\R^n), H^1(\R^n)}\\
&=\lim_{j\to \infty} \int_{\Omega_1} (-\Delta \hat f_j) \overline{u}dx=0.
\end{aligned}
\end{equation}

We shall consider the compactly supported distribution $g=-\Delta f-v\in H^{-1}(\R^n)$.  As $\supp(v), \supp(f)\subset \overline{\Omega_1}$, we see that $\supp(g)\subset \overline{\Omega_1}$, and it follows from \eqref{eq_1_3} that $\supp(g)\subset \p \Omega_1$.   As $\p \Omega_1$ is a codimension $1$ submanifold in $\R^n$,  by \cite[Theorem 5.1.13]{Agranovich_book},  \cite[Lemma 3.39]{McLean_book}, we obtain that
\[
g=h\otimes\delta_{\p \Omega_1}, \quad h\in H^{-1/2}(\p \Omega_1).
\]
Furthermore, in view of \eqref{eq_1_3}, we have $\supp(g)\subset \p \Omega_1\cap \p \Omega_2=\overline{U}$, and therefore,  $\supp(h)\subset \overline{U}$. Since $U\subset \p \Omega_1$ is an open set with $C^\infty$ boundary,  by \cite[Theorem 3.29]{McLean_book},  there exists a sequence $h_j\in C^\infty_0(U)$ such that $h_j\to h$ in $H^{-1/2}(\p \Omega_1)$.
Hence, we have
\begin{equation}
\label{eq_1_6}
\begin{aligned}
 &(g, \text{Ext}(u))_{H^{-1}(\R^n), H^1(\R^n)}= (h, u|_{\p \Omega_1})_{H^{-1/2}(\p \Omega_1), H^{1/2}(\p \Omega_1)}\\
 &=\lim_{j\to \infty}  (h_j, u|_{\p \Omega_1})_{H^{-1/2}(\p \Omega_1), H^{1/2}(\p \Omega_1)}=\lim_{j\to \infty}  \int_{\p \Omega_1} h_j \overline{u} dS=0,
\end{aligned}
\end{equation}
where in the last equality we use that $u|_{\p \Omega_1\cap \p \Omega_2}=0$.  It follows from \eqref{eq_1_5} and \eqref{eq_1_6} that
\[
(v, u)_{\tilde H^{-1}(\Omega_1), H^1(\Omega_1)} =(-\Delta f, \text{Ext}(u))_{H^{-1}(\R^n), H^1(\R^n)} -(g, \text{Ext}(u))_{H^{-1}(\R^n), H^1(\R^n)} =0.
\]
This completes the proof of Lemma \ref{lem_Runge}.
 \end{proof}

\subsection{From local to global results. Proof of Theorem \ref{thm_product_gradients}}
\label{subsection_global}
We shall follow \cite{DKSU}. We want to show that $f$ vanishes inside $\Omega$. Let us fix a point $x_1\in \Omega$ and let $\theta:[0,1]\to \overline{\Omega}$ be a $C^1$ curve joining $x_0\in \p \Omega\setminus \tilde \Gamma$ to $x_1$ such that $\theta(0)=x_0$, $\theta'(0)$ is the interior normal to $\p \Omega$ at $x_0$ and $\theta(t)\in \Omega$, for all $t\in (0,1]$.  Let us set
\[
\Theta_\varepsilon(t)=\{x\in \overline{\Omega}: d(x, \theta([0,t]))\le \varepsilon\}
\]
so that $\Theta_\varepsilon(t)$ is
a  closed neighborhood of the curve ending at $\theta(t)$, $t\in [0,1]$.  Let
\[
I=\{t\in [0,1]: f\text{ vanishes a.e. on } \Theta_\varepsilon(t)\cap \Omega\}.
\]
Note that by Proposition \ref{prop_local},  $0\in I$ if $\varepsilon>0$ is small enough. One can easily see that $I$ is a closed subset of $[0,1]$. If we show that $I$ is open then as $[0,1]$ is connected,  $I=[0,1]$. Hence, $x_1\notin \supp(f)$. Since
$x_1$ is an arbitrary point of $\Omega$, we have $f=0$ on $\Omega$, completing the proof of Theorem \ref{thm_product_gradients}.

Thus, we only need to show that $I$ is open. To this end, let $t\in I$ and $\varepsilon>0$ be small enough so that $\p \Theta_\varepsilon(t)\cap \p \Omega\subset \p \Omega\setminus\tilde\Gamma$. For $\varepsilon>0$ sufficiently small, the set $\p\Theta_\varepsilon(t)$ intersects $\p \Omega$ transversally, and in suitable local coordinates $y_1,\dots, y_n$ centered at $x_0$, $\p\Omega$ is given by $y_n=0$, and $\p\Theta_\varepsilon(t)$ is given by $y_1=0$.  It is then easy to see that the set $\Omega\setminus \Theta_\varepsilon(t)$ can be smoothed out into an open subset $\Omega_1$ of $\Omega$ with smooth boundary so that
\[
\Omega_1\supset \Omega\setminus \Theta_\varepsilon(t), \quad \p \Omega\cap \p\Omega_1\supset \tilde \Gamma,
\]
and $\p \Omega_1\cap \p \Omega=\overline{U}$ where $U\subset \p \Omega_1$ is an open set with $C^\infty$ boundary.
Furthermore, let us augment the set $\Omega$ by smoothing out the set $\Omega\cup B(x_0, \varepsilon')$, with $0<\varepsilon'\ll \varepsilon$ sufficiently small,  into an open set $\Omega_2$ with smooth boundary so that
\[
\p \Omega_2\cap\p \Omega\supset \p \Omega_1\cap\p \Omega=\p \Omega_1\cap \p\Omega_2 \supset \tilde \Gamma.
\]
Let $G_{\Omega_2}$ be the Green kernel associated to the open set $\Omega_2$,
\[
-\Delta_y G_{\Omega_2}(x,y)=\delta(x-y), \quad G_{\Omega_2}(x,\cdot)|_{\p \Omega_2}=0.
\]
Consider the function
\[
v(x,z)=\int_{\Omega_1} f(y)\nabla_y G_{\Omega_2} (x,y)\cdot \nabla_y G_{\Omega_2} (z,y)dy, \quad x,z\in \Omega_2\setminus\overline{\Omega_1},
\]
which is harmonic in both $x,z\in \Omega_2\setminus\overline{\Omega_1}$. As $f=0$ on $\Theta_\varepsilon(t)\cap \Omega$, we get
\[
v(x,z)=\int_{\Omega} f(y)\nabla_y G_{\Omega_2} (x,y)\cdot \nabla_y G_{\Omega_2} (z,y)dy, \quad x,z\in \Omega_2\setminus\overline{\Omega_1}.
\]
When $x,z\in \Omega_2\setminus\overline{\Omega}$, the Green functions $G_{\Omega_2} (x,\cdot ), G_{\Omega_2} (z,\cdot)\in C^\infty(\overline{\Omega})$ are harmonic on $\Omega$, and $G_{\Omega_2} (x,\cdot )|_{\tilde \Gamma}=G_{\Omega_2} (z,\cdot )|_{\tilde \Gamma}=0$.  By assumption \eqref{eq_3_1}, we have $v(x,z)=0$ when $x,z\in \Omega_2\setminus\overline{\Omega}$. Since $v(x,z)$ is harmonic when $x,z\in \Omega_2\setminus\overline{\Omega_1}$ and $\Omega_2\setminus\overline{\Omega_1}$ is connected, by unique continuation, $v(x,z)=0$ when $x,z\in \Omega_2\setminus\overline{\Omega_1}$, i.e.
\begin{equation}
\label{eq_1_8}
\int_{\Omega_1} f(y)\nabla_y G_{\Omega_2} (x,y)\cdot \nabla_y G_{\Omega_2} (z,y)dy=0, \quad x,z\in \Omega_2\setminus\overline{\Omega_1}.
\end{equation}
Letting $a\in C^\infty(\overline{\Omega_2})$, $\supp(a)\subset \Omega_2\setminus\overline{\Omega_1}$, $b\in C^\infty(\overline{\Omega_2})$, $\supp(b)\subset \Omega_2\setminus\overline{\Omega_1}$, multiplying \eqref{eq_1_8} by $a(x)$, $b(z)$, and integrating, we obtain that
\[
\int_{\Omega_1} f(y)\int_{\Omega_2}\nabla_y G_{\Omega_2} (x,y)a(x)dx\cdot \int_{\Omega_2}\nabla_y G_{\Omega_2} (z,y) b(z)dz dy=0.
\]
Hence, we have
\begin{equation}
\label{eq_1_9}
\int_{\Omega_1} f \nabla u\cdot \nabla v dy=0,
\end{equation}
for all $u,v\in W$.  By continuity of the bilinear form,
\[
H^1(\Omega_1)\times H^1(\Omega_1)\to \C, \quad (u,v)\mapsto \int_{\Omega_1} f\nabla  u\cdot \nabla v dy,
\]
and by Lemma \ref{lem_Runge},  we get \eqref{eq_1_9} for any $u, v\in C^\infty(\overline{\Omega_1})$ harmonic in $\Omega_1$ which vanish on $\p\Omega_1\cap \p\Omega_2$. Now by Proposition \ref{prop_local}, $f$ vanishes on a neighborhood of $\p \Omega\setminus(\p \Omega_1\cap \p \Omega_2)$, and hence, $I$ is an open set.

\subsection{Proof of Proposition \ref{prop_local}}
\label{subsection_local}

We shall follow \cite{DKSU}. First using a conformal transformation of harmonic functions, we reduce to the following setting: $x_0=0$,
the tangent plane to $\Omega$ at $x_0$ is given by $x_1=0$,
\[
\Omega\subset \{ x\in \R^n: |x+e_1|<1\}, \quad \tilde \Gamma =\{x\in \p \Omega: x_1\le -2 c\}
\]
for some $c>0$. Here $e_1=(1,0,\dots, 0)$ be the first coordinate vector.

Let $p(\xi)=\xi^2$, $\xi\in \R^n$, be the principal symbol of $-\Delta$, and let us denote by $p(\zeta)=\zeta^2$ its  holomorphic extension to  $\C^n$. We write
\[
p^{-1}(0)=\{\zeta\in \C^n: \zeta^2=0\}.
\]

Let $\zeta\in p^{-1}(0)$ and let $\chi \in C_0^\infty(\R^n)$ be a cutoff function such that $\chi=1$ on $\tilde \Gamma$. Consider the following function
\begin{equation}
\label{eq_4_0}
u(x,\zeta)=e^{-\frac{i}{h} x\cdot\zeta} +w(x,\zeta),
\end{equation}
where $w$ is the solution to the Dirichlet problem,
\[
\begin{cases} -\Delta w =0 \quad \text{in}\quad \Omega,\\
w|_{\p \Omega}=-(e^{-\frac{i}{h} x\cdot\zeta} \chi)|_{\p \Omega}.
\end{cases}
\]
Thus, $u\in C^\infty(\overline{\Omega})$, $u$ is harmonic in $\Omega$,  and $u|_{\tilde \Gamma}=0$.
We have
\begin{equation}
\label{eq_4_1}
\begin{aligned}
\|w\|_{H^1(\Omega)}\le C\|e^{-\frac{i}{h} x\cdot\zeta} \chi \|_{H^{1/2}(\p \Omega)}\le C \|e^{-\frac{i}{h} x\cdot\zeta} \chi \|_{H^{1}(\p \Omega)}^{1/2} \|e^{-\frac{i}{h} x\cdot\zeta} \chi \|_{L^2(\p \Omega)}^{1/2}\\
\le C(1+h^{-1}|\zeta|)^{1/2} e^{\frac{1}{h}H_K(\text{Im}\, \zeta)},
\end{aligned}
\end{equation}
where $H_K$ is the supporting function of the compact subset $K=\supp\chi\cap \p \Omega$ of the boundary,
\[
H_K(\xi)=\sup_{x\in K} x\cdot \xi, \quad \xi\in \R^n.
\]

Let us take $\chi \in C_0^\infty(\R^n)$ be such that $\supp(\chi)\subset \{x\in \R^n: x_1\le -c\}$ and $\chi=1$ on $\{x\in \p \Omega: x_1\le -2c\}$. Then \eqref{eq_4_1} implies that
\begin{equation}
\label{eq_4_2}
\|w\|_{H^1(\Omega)}\le C (1+h^{-1}|\zeta|)^{1/2}  e^{-\frac{c}{h}\text{Im}\, \zeta_1} e^{\frac{1}{h}|\text{Im}\, \zeta'|},
\end{equation}
when $\text{Im}\,\zeta_1\ge 0$.

The cancellation identity \eqref{eq_3_1} gives that
\begin{equation}
\label{eq_4_3}
\int_{\Omega} f(x) hDu (x,\zeta)\cdot hDu(x,\eta)dx=0,
\end{equation}
for all $\zeta,\eta\in p^{-1}(0)$, where $u(x,\zeta)$, $u(x,\eta)$ are harmonic functions of the form \eqref{eq_4_0}.   It follows from \eqref{eq_4_3} that
\begin{align*}
\int_\Omega f(x) \zeta\cdot\eta &e^{-\frac{ix\cdot(\zeta+\eta)}{h}}dx= \int_\Omega f(x) \zeta e^{-\frac{ix\cdot \zeta}{h}} \cdot hDw (x,\eta)dx\\
&+ \int_\Omega f(x) \eta e^{-\frac{ix\cdot \eta}{h}} \cdot hDw (x,\zeta)dx -\int_{\Omega} f (x)hDw(x,\zeta)\cdot hDw (x,\eta)dx.
\end{align*}
Thus,
\begin{equation}
\label{eq_4_4}
\begin{aligned}
\bigg|\int_\Omega &f(x) \zeta\cdot\eta e^{-\frac{ix\cdot(\zeta+\eta)}{h}}dx\bigg|\le  \|f\|_{L^\infty(\Omega)} \big(|\zeta| \|e^{-\frac{ix\cdot \zeta}{h}}\|_{L^2(\Omega)}\|hDw (x,\eta)\|_{L^2(\Omega)}\\
&+ |\eta| \|e^{-\frac{ix\cdot \eta}{h}}\|_{L^2(\Omega)}\|hDw (x,\zeta)\|_{L^2(\Omega)}+ \|hDw (x,\zeta)\|_{L^2(\Omega)} \|hDw (x,\eta)\|_{L^2(\Omega)}\big).
\end{aligned}
\end{equation}
Now when $\text{Im}\,\zeta_1\ge 0$, using the fact that $\Omega\subset \{x\in \R^n: |x +e_1|<1\}$, we get
\begin{equation}
\label{eq_4_5}
 \|e^{-\frac{ix\cdot \zeta}{h}}\|_{L^2(\Omega)}\le Ce^{\frac{|\text{Im}\, \zeta' |}{h}}, \quad \zeta\in p^{-1}(0).
\end{equation}
We obtain from \eqref{eq_4_4} using \eqref{eq_4_2} and \eqref{eq_4_5} that
\begin{equation}
\label{eq_4_6}
\begin{aligned}
\bigg|\int_\Omega f(x) &\zeta\cdot\eta e^{-\frac{ix\cdot(\zeta+\eta)}{h}}dx\bigg|\le C \|f\|_{L^\infty(\Omega)}  e^{\frac{|\text{Im}\, \zeta' |+ |\text{Im}\, \eta'|}{h}} e^{-\frac{c}{h}\min(\text{Im}\, \zeta_1, \text{Im}\, \eta_1)} \\
&\big( |\zeta|(h^2+h|\eta|)^{1/2}+ |\eta|(h^2+h|\zeta|)^{1/2} + (h^2+h|\zeta|)^{1/2} (h^2+h|\eta|)^{1/2}\big),
\end{aligned}
\end{equation}
for all $\zeta,\eta\in p^{-1}(0)$, $\text{Im}\,\zeta_1\ge 0$, $\text{Im}\, \eta_1\ge 0$.

As in  \cite{DKSU}, consider the map
\[
s:p^{-1}(0)\times p^{-1}(0)\to \C^n, \quad (\zeta,\eta)\mapsto \zeta+\eta.
\]
Its differential at a point $(\zeta_0,\eta_0)$,
\[
Ds(\zeta_0,\eta_0) : T_{\zeta_0}p^{-1}(0)\times T_{\eta_0} p^{-1}(0)\to \C^n, \quad (\zeta,\eta)\mapsto \zeta+\eta,
\]
is surjective provided that $\C^n=T_{\zeta_0}p^{-1}(0)+T_{\eta_0} p^{-1}(0)$, i.e. $\zeta_0$ and $\eta_0$ are linearly independent.  In particular, the latter is true if $\zeta_0=\gamma$ and $\eta_0=-\overline{\gamma}$ with $\gamma=(i,1, 0, \dots, 0)\in \C^n$.  Now $\zeta_0+\eta_0=2i e_1$, and therefore, the inverse function theorem implies that there exists $\varepsilon>0$ small such that any $z\in \C^n$, $|z-2ie_1|<2\varepsilon$, may be decomposed as $z=\zeta+\eta$ where $\zeta, \eta\in p^{-1}(0)$, $|\zeta-\gamma|<C_1\varepsilon$ and $|\eta+\overline{\gamma}|<C_1\varepsilon$ with some $C_1>0$. Furthermore, by rescaling, any $z\in \C^n$ such that  $|z-2i ae_1|<2\varepsilon a$ for some $a>0$, may be decomposed as
\begin{equation}
\label{eq_4_7}
z=\zeta+\eta, \quad \zeta, \eta\in p^{-1}(0),\quad  |\zeta-a \gamma|<C_1a \varepsilon, \quad |\eta+a\overline{\gamma}|<C_1a\varepsilon.
\end{equation}
Now \eqref{eq_4_7} implies that $|\text{Im}\,\zeta'|<C_1a\varepsilon$, $|\text{Im}\,\eta'|<C_1a\varepsilon$, $|\zeta|\le Ca$, and $|\eta|\le Ca$.
If $\varepsilon>0$ is small enough, \eqref{eq_4_7} gives that $\text{Im}\, \zeta_1>a/2$, $\text{Im}\, \eta_1>a/2$, and
$|\zeta\cdot\eta|\ge a^2$,
Hence, it follows from \eqref{eq_4_6} and \eqref{eq_4_7} that
\begin{equation}
\label{eq_4_8}
\bigg|\int_\Omega f(x) e^{-\frac{ix\cdot z}{h}}dx\bigg|\le C \|f\|_{L^\infty(\Omega)} e^{-\frac{ca}{2h}}e^{\frac{2C_1 a\varepsilon}{h}}[a^{-1}(1+a)^{1/2}+a^{-2}(1+a)],
\end{equation}
for all $z\in \C^n$ such that $|z-2i ae_1|<2\varepsilon a$ for some $a>0$ and  $\varepsilon>0$ sufficiently small.
Following  \cite{DKSU} and choosing $a>1$ large, we see that the bound
\eqref{eq_4_8} is completely analogous to the estimate (3.8) in \cite{DKSU}. We may therefore complete the proof of Proposition \ref{prop_local} by repeating the arguments of  \cite{DKSU} exactly as they stand.

\section{Proof of Theorem \ref{thm_main_2}}

\label{sec_partial_data}

We shall first establish that the knowledge of the partial Dirichlet--to--Neumann map $\Lambda^{\Gamma_1, \Gamma_2}_{q,V}$ allows us to recover the coefficient $q$ in the quadratic gradient term in \eqref{eq_ref_1}. To that end, let $\varepsilon=(\varepsilon_1,  \varepsilon_2)\in \C^2$, and let $f_k\in C^\infty(\p \Omega)$, $\supp(f_k)\subset \Gamma_1$, $k=1,2$. An application of Theorem \ref{thm_well-posedness}  shows that for all $|\varepsilon|$ sufficiently small, the Dirichlet problem
\begin{equation}
\label{eq_10_2}
\begin{cases}
-\Delta u_j+ q_j(x)(\nabla u_j)^2+ \sum_{k=3}^\infty V_k^{(j)}(x)\frac{u_j^k}{k!}=0 \quad \text{in}\quad \Omega, \\
u_j=\varepsilon_1 f_1 +\varepsilon_2 f_2 \quad \text{on}\quad \p \Omega,
\end{cases}
\end{equation}
$j=1,2$, has a unique small solution $u_j=u_j(\cdot,\varepsilon)\in C^{2,\alpha}(\overline{\Omega})$, which depends holomorphically on $\varepsilon\in \text{neigh}(0,\C^2)$ with values in $C^{2,\alpha}(\overline{\Omega})$. We shall now carry out a second order linearization of the problem \eqref{eq_10_2} and of the corresponding partial Dirichlet--to--Neumann maps.  Differentiating \eqref{eq_10_2} with respect to $\varepsilon_l$, $l=1,2$, taking $\varepsilon=0$,  and using that $u_j(x,0)=0$, we get
\begin{equation}
\label{eq_10_3}
\begin{cases}
\Delta v_j^{(l)}=0 \quad \text{in}\quad \Omega, \\
v_j^{(l)} = f_l \quad \text{on}\quad \p \Omega,
\end{cases}
\end{equation}
where $v_j^{(l)}= \p_{ \varepsilon_l} u_j|_{\varepsilon=0}$. By the uniqueness and the elliptic regularity for the Dirichlet problem \eqref{eq_10_3}, we see that  $v^{(l)}:=v_1^{(l)}=v_2^{(l)}\in C^\infty(\overline{\Omega})$, $l=1,2$.

Applying $\p_{\varepsilon_1}\p_{\varepsilon_2}|_{\varepsilon=0}$ to \eqref{eq_10_2}, we get
\begin{equation}
\label{eq_10_4}
\begin{cases}
-\Delta (\p_{\varepsilon_1}\p_{\varepsilon_2} u_j |_{\varepsilon=0}) + 2q_j(x) \nabla \p_{\varepsilon_1}  u_j|_{\varepsilon=0} \cdot \nabla \p_{\varepsilon_2} u_j|_{\varepsilon=0} =0 \quad \text{in}\quad \Omega, \\
\p_{\varepsilon_1}\p_{\varepsilon_2} u_j |_{\varepsilon=0} = 0 \quad \text{on}\quad \p \Omega,
\end{cases}
\end{equation}
and letting $w_j=  \p_{\varepsilon_1} \p_{\varepsilon_2} u_j|_{\varepsilon=0}$,  \eqref{eq_10_4} yields that
\begin{equation}
\label{eq_10_5}
\begin{cases}
-\Delta w_j+ 2 q_j(x) \nabla v^{(1)} \cdot \nabla v^{(2)}=0 \quad \text{in}\quad \Omega, \\
w_j = 0 \quad \text{on}\quad \p \Omega.
\end{cases}
\end{equation}
The fact that $\Lambda_{q_1, V^{(1)}}^{\Gamma_1, \Gamma_2}(\varepsilon_1 f_1+\varepsilon_2 f_2)=\Lambda_{q_2, V^{(2)}}^{\Gamma_1, \Gamma_2}(\varepsilon_1 f_1+\varepsilon_2 f_2)$ for all small $\varepsilon_1, \varepsilon_2$ and all $f_1, f_2\in C^\infty(\p \Omega)$ with $\supp(f_1), \supp(f_2)\subset \Gamma_1$ implies that $\p_\nu u_1|_{\Gamma_2}=\p_\nu u_2|_{\Gamma_2}$.  Hence, an application of $\p_{\varepsilon_1}\p_{\varepsilon_2}|_{\varepsilon=0}$ gives $\p_\nu w_1|_{\Gamma_2}=\p_\nu w_2|_{\Gamma_2}$. Multiplying \eqref{eq_10_5} by $v^{(3)} \in C^{\infty}(\overline{\Omega})$ harmonic in $\Omega$ and applying Green's formula,  we get
\[
2\int_{\Omega}(q_1-q_2)(\nabla v^{(1)}\cdot \nabla v^{(2)}) v^{(3)}dx=\int_{\p \Omega\setminus \Gamma_2} (\p_\nu w_1- \p_\nu w_2)v^{(3)}dS=0,
\]
provided that $\supp(v^{(3)}|_{\partial \Omega})\subset \Gamma_2$. Hence, we obtain that
\[
\int_{\Omega}(q_1-q_2)(\nabla v^{(1)}\cdot \nabla v^{(2)}) v^{(3)}dx=0
\]
for any $v^{(l)}\in C^{\infty}(\overline{\Omega})$ harmonic in $\Omega$, $l=1,2,3$,  such that $\supp(v^{(l)}|_{\partial \Omega})\subset \Gamma_1$, $l=1,2$, and $\supp(v^{(3)}|_{\partial \Omega})\subset \Gamma_2$. Taking  $v^{(3)}\not\equiv 0$ and applying Theorem \ref{thm_product_gradients}, we obtain that
\[
(q_1-q_2)v^{(3)}=0 \quad \text{in}\quad \Omega.
\]
Now $v^{(3)}$ is harmonic and therefore,  the set $(v^{(3)})^{-1}(0)$ is of measure zero, see \cite{Mityagin}. Hence $q_1=q_2=:q$ in $\Omega$.

We now come to prove that $V^{(1)}=V^{(2)}$.  To that end, it suffices to show that $V_m^{(1)}=V_m^{(2)}$ for all $m\ge 3$, see \eqref{eq_10_2}, which will be done inductively. Let $\varepsilon=(\varepsilon_1,\dots, \varepsilon_m)\in \C^m$, $m\ge 3$, be small, and $f_k\in C^\infty(\p \Omega)$, $\supp(f_k)\subset \Gamma_1$, $k=1,\dots, m$.  Let $u_j=u_j(\cdot, \varepsilon)\in C^{2,\alpha}(\overline{\Omega})$, $j=1,2$,  be the unique small solution to the Dirichlet problem
\begin{equation}
\label{eq_10_2_m}
\begin{cases}
-\Delta u_j+ q(x)(\nabla u_j)^2+ \sum_{k=3}^\infty V_k^{(j)}(x)\frac{u_j^k}{k!}=0 \quad \text{in}\quad \Omega, \\
u_j=\varepsilon_1 f_1+\dots +\varepsilon_m f_m \quad \text{on}\quad \p \Omega.
\end{cases}
\end{equation}
We shall  first discuss the case $m=3$.  The first linearization of \eqref{eq_10_2_m} leads to the problem \eqref{eq_10_3} with $l=1,2,3$, and therefore, $\p_{\varepsilon_l}u_1|_{\varepsilon=0}=\p_{\varepsilon_l}u_2|_{\varepsilon=0}=:v^{(l)}$, $l=1,2,3$. The secord linearization of \eqref{eq_10_2_m} gives rise  to a problem of the form \eqref{eq_10_4} with $q_j=q$, and therefore, $\p_{\varepsilon_{l_1}}\p_{\varepsilon_{l_2}}u_1|_{\varepsilon=0}=\p_{\varepsilon_{l_1}}\p_{\varepsilon_{l_2}}u_2|_{\varepsilon=0}=:w^{(l_1, l_2)}$, $l_1, l_2\in \{1,2,3\}$. Applying $\p_{\varepsilon_1}\p_{\varepsilon_2}\p_{\varepsilon_3}|_{\varepsilon=0}$ to \eqref{eq_10_2_m}, we obtain that
\[
\begin{cases}
-\Delta w_j+ V_3^{(j)} v^{(1)}v^{(2)}v^{(3)} =H  \quad \text{in}\quad \Omega, \\
w_j = 0 \quad \text{on}\quad \p \Omega,
\end{cases}
\]
where $H(x)=- 2 q(x) [\nabla v^{(1)} \cdot \nabla w^{(2,3)}+ \nabla v^{(2)} \cdot \nabla w^{(1,3)}+ \nabla v^{(3)} \cdot \nabla w^{(1,2)}]$ is independent of $j$. It follows that
\[
\int_{\Omega} (V_3^{(1)}-V_3^{(2)})v^{(1)}v^{(2)}v^{(3)}v^{4}dx=0,
\]
for any $v^{(l)}\in C^{\infty}(\overline{\Omega})$ harmonic in $\Omega$, $l=1,\dots, 4$, such that $\supp(v^{(l)}|_{\partial \Omega})\subset \Gamma_1$, $l=1,2, 3$, and $\supp(v^{(4)}|_{\partial \Omega})\subset \Gamma_2$. Arguing as in \cite{KU_2019}, using the density result of \cite{DKSU}, we conclude that $V_3^{(1)}=V_3^{(2)}$.  The general inductive argument can now be carried out exactly as in  \cite{KU_2019}. The proof of Theorem \ref{thm_main_2} is complete.

\section{Proof of Theorem \ref{thm_main_3}}

\label{sec_proof_obstacle}

Theorem \ref{thm_main_3} is an immediate consequence of Theorem \ref{thm_main_2}, once the obstacle has been recovered. The proof of the fact that $D_1=D_2$ is standard, see for instance \cite{LLLS_new}, and is presented here for completeness and convenience of the reader.

Following \cite{LLLS_new}, we proceed by performing a first order linearization of the problem \eqref{eq_ref_2}. To that end, let $\varepsilon\in \C$, and let $f\in C^\infty(\p \Omega)$, $\supp(f)\subset \Gamma_1$.  An application of Theorem \ref{thm_well-posedness}  shows that for all $|\varepsilon|$ sufficiently small, the Dirichlet problem
\begin{equation}
\label{eq_20_1}
\begin{cases}
-\Delta u_j+ q_j(x)(\nabla u_j)^2+ V_j(x,u_j)=0 & \text{in}\quad \Omega\setminus \overline{D_j}, \\
u_j=0 &  \text{on}\quad \p D_j, \\
u_j=\varepsilon f &  \text{on}\quad \p \Omega,
\end{cases}
\end{equation}
$j=1,2$, has a unique small solution $u_j=u_j(\cdot,\varepsilon)\in C^{2,\alpha}(\overline{\Omega}\setminus D_j)$, which depends holomorphically on $\varepsilon\in \text{neigh}(0,\C)$ with values in $C^{2,\alpha}(\overline{\Omega}\setminus D_j)$. Differentiating \eqref{eq_20_1} with respect to $\varepsilon$,  taking $\varepsilon=0$, and writing $v_j=\p_\varepsilon u_j|_{\varepsilon=0}$, we get
\begin{equation}
\label{eq_20_2}
\begin{cases}
-\Delta v_j=0 & \text{in}\quad \Omega\setminus \overline{D_j}, \\
v_j=0 &  \text{on}\quad \p D_j, \\
v_j= f &  \text{on}\quad \p \Omega.
\end{cases}
\end{equation}
$j=1,2$. The fact that $\Lambda^{D_1, \Gamma_1, \Gamma_2}_{q_1, V^{(1)}}(\varepsilon f)=\Lambda^{D_2, \Gamma_1, \Gamma_2}_{q_2, V^{(2)}}(\varepsilon f)$ for all small $\varepsilon$ and all $f\in C^\infty(\p \Omega)$ with $\supp(f)\subset \Gamma_1$ implies that  $\p_\nu v_1|_{\Gamma_2}=\p_\nu v_2|_{\Gamma_2}$.

Assume that $D_1\ne D_2$, and assume for example that $D_2$ is not contained in $D_1$. Let $G$ be the connected component of $\Omega\setminus({\overline{D_1}}\cup \overline{D_2})$ whose boundary contains $\p \Omega$. Then there exists a point $x_0\in \p D_2$ such that $x_0\in \Omega\setminus \overline{D_1}$ and $x_0\in \p G$, see
 \cite[p. 1579]{Isakov_90}. We reproduce the argument of \cite{Isakov_90} for completeness and convenience of the reader. Indeed, by our assumption and the fact that $\p D_1$ is smooth, there is a point $x'\in D_2\setminus\overline{D_1}$. Let $x''\in G$ be arbitrary and since $\Omega\setminus\overline{D_1}$ is connected, there is a continuous path $s(t)\in \Omega\setminus\overline{D_1}$, for $t\in [0,1]$, such that $s(0)=x'$ and $s(1)=x''$. We let $x_0=s(t_0)$ where $t_0=\sup\{t: s(t)\in D_2\}$.

To complete the proof, we follow \cite{LLLS_new} and let $v=v_1-v_2$. Then we have $-\Delta v=0$ in $G$, $v|_{\p \Omega}=0$, and $\p_\nu v|_{\Gamma_2}=0$. By the unique continuation principle for harmonic functions and continuity of harmonic functions up to the boundary, we conclude that $v_1=v_2$ in $\overline{G}$.  In view of \eqref{eq_20_2}, we get $0=v_2(x_0)=v_1(x_0)$. Let us fix some $f\in C^\infty(\p \Omega)$, $\supp(f)\subset \Gamma_1$, such that $f\ge 0$, $f\not\equiv 0$. As $x_0\in \Omega\setminus\overline{D_1}$, the maximum principle yields that $v_1\equiv 0$ in $\Omega\setminus\overline{D_1}$. Since $v_1$ is continuous up to the boundary of  $\Omega\setminus\overline{D_1}$, we get a contradiction, and therefore, $D_1=D_2$. The proof of Theorem \ref{thm_main_3} is complete.

\begin{appendix}

\section{Well-posedness of the Dirichlet problem for a class of  semilinear elliptic equations with a quadratic gradient term}
\label{App_well_posedness}

The purpose of this appendix is to show the well-posedness of the Dirichlet problem for a class of  semilinear elliptic equations with small boundary data. The argument is standard and is given here for completeness and convenience of the reader.

Let $\Omega\subset \R^n$, $n\ge 2$, be a bounded open set with $C^\infty$ boundary. Let $k\in \N\cup \{0\}$ and $0<\alpha<1$.  The H\"older space  $C^{k,\alpha}(\overline{\Omega})$ consists of all functions $u\in C^k(\overline{\Omega})$ such
\[
\|u\|_{C^{k,\alpha}(\overline{\Omega})}:=\sum_{|\alpha|=k} \sup_{x,y\in \Omega, x\ne y}\frac{|\p^\alpha u(x)-\p^\alpha u(y)|}{|x-y|^\alpha}+\|u\|_{L^\infty(\Omega)}<\infty.
\]
We shall write $C^{\alpha}(\overline{\Omega})=C^{0,\alpha}(\overline{\Omega})$. For future reference,  we remark that $C^{k,\alpha}(\overline{\Omega})$ is an  algebra under pointwise multiplication, and
\begin{equation}
\label{eq_app_ref_01}
\|uv\|_{C^{k, \alpha}(\overline{\Omega})} \le C\big(\|u\|_{C^{k,\alpha}(\overline{\Omega})} \|v\|_{L^\infty(\Omega)}+\|u\|_{L^\infty(\Omega)} \|v\|_{C^{k,\alpha}(\overline{\Omega})} \big), \quad u,v\in C^{k,\alpha}(\overline{\Omega}),
\end{equation}
see \cite[Theorem A.7]{Hormander_1976}. We also have the corresponding spaces $C^{k,\alpha}(M)$, where $M$ is a compact $C^\infty$ manifold.

We shall be concerned with the following Dirichlet problem,
\begin{equation}
\label{eq_app_ref_1}
\begin{cases}
-\Delta u+ q(x)(\nabla u)^2+ V(x,u)=0 \quad \text{in}\quad \Omega, \\
u=f \quad \text{on}\quad \p \Omega.
\end{cases}
\end{equation}
Here $q\in C^\alpha(\overline{\Omega})$, for some $0<\alpha<1$, and the function  $V:\overline{\Omega}\times \C\to \C$ satisfies the following conditions:
\begin{itemize}
\item[(a)] the map $\C\ni z\mapsto V(\cdot,z)$ is holomorphic with values in the H\"older space  $C^\alpha(\overline{\Omega})$,
\item[(b)] $V(x,0)=0$, for all $x\in \overline{\Omega}$.
\end{itemize}
The condition (b) ensures that $u=0$ is a solution to \eqref{eq_app_ref_1}  when $f=0$.  It follows from (a) and (b) that $V$ can be expanded into a power series
\begin{equation}
\label{eq_app_V}
V(x,z)=\sum_{k=1}^\infty V_k(x) \frac{z^k}{k!},\quad V_k(x):=\p_z^k V(x,0)\in C^\alpha(\overline{\Omega}),
\end{equation}
converging in the $C^\alpha(\overline{\Omega})$ topology.  Assume for simplicity that $V_1\in C^\infty(\overline{\Omega})$ and  let us suppose furthermore that
\begin{itemize}
\item[(c)]
$0$ is not a Dirichlet eigenvalue of $-\Delta+V_1$.
\end{itemize}

We have the following result.

\begin{thm}
\label{thm_well-posedness}
There exist $\delta>0$, $C>0$ such that for any $f\in  B_{\delta}(\p \Omega):=\{f\in C^{2,\alpha}(\p \Omega): \|f\|_{C^{2,\alpha}(\p \Omega)}< \delta\}$, the problem \eqref{eq_app_ref_1} has a solution $u=u_f\in C^{2,\alpha}(\overline{\Omega})$ which satisfies
\[
\|u\|_{C^{2,\alpha}(\overline{\Omega})}\le C\|f\|_{C^{2,\alpha}(\p \Omega)}.
\]
Furthermore, the solution $u$ is unique within the class $\{u\in C^{2,\alpha}(\overline{\Omega}): \|u\|_{C^{2,\alpha}(\overline{\Omega})}< C\delta \}$ and
 it is depends holomorphically on $f\in B_\delta(\p \Omega)$.
\end{thm}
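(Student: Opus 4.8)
The plan is to recast \eqref{eq_app_ref_1} as a fixed-point equation organized around the invertible linear operator $L:=-\Delta+V_1$. Splitting $V(x,u)=V_1(x)u+\tilde V(x,u)$ with $\tilde V(x,u):=\sum_{k\ge2}V_k(x)\frac{u^k}{k!}$, equation \eqref{eq_app_ref_1} becomes $Lu=N(u)$ in $\Omega$, $u|_{\p\Omega}=f$, where $N(u):=-q(\nabla u)^2-\tilde V(x,u)$ vanishes to second order at $u=0$. By hypothesis (c) and the Schauder theory, the Dirichlet problem for $L$ is well posed; let $u_f^0\in C^{2,\alpha}(\overline{\Omega})$ be the solution of $Lu_f^0=0$, $u_f^0|_{\p\Omega}=f$, with $\|u_f^0\|_{C^{2,\alpha}(\overline{\Omega})}\le C\|f\|_{C^{2,\alpha}(\p\Omega)}$, and let $L_0^{-1}\colon C^\alpha(\overline{\Omega})\to C^{2,\alpha}(\overline{\Omega})$ be the bounded solution operator for $L$ with homogeneous Dirichlet data. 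Writing $u=u_f^0+w$, the problem is equivalent to the fixed-point equation $w=T_f(w):=L_0^{-1}N(u_f^0+w)$ for $w$ in a small ball of $X:=\{v\in C^{2,\alpha}(\overline{\Omega}):v|_{\p\Omega}=0\}$.

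Next I would record the quadratic estimates driving the contraction. The multiplicative inequality \eqref{eq_app_ref_01} gives $\|q(\nabla u)^2\|_{C^\alpha(\overline{\Omega})}\le C\|q\|_{C^\alpha(\overline{\Omega})}\|u\|_{C^{2,\alpha}(\overline{\Omega})}^2$, and, using $(\nabla u_1)^2-(\nabla u_2)^2=(\nabla u_1+\nabla u_2)\cdot\nabla(u_1-u_2)$, the Lipschitz-type bound $\|q((\nabla u_1)^2-(\nabla u_2)^2)\|_{C^\alpha}\le C(\|u_1\|_{C^{2,\alpha}}+\|u_2\|_{C^{2,\alpha}})\|u_1-u_2\|_{C^{2,\alpha}}$. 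Cauchy estimates for the $C^\alpha(\overline{\Omega})$-valued holomorphic map $z\mapsto V(\cdot,z)$ bound $\|V_k\|_{C^\alpha(\overline{\Omega})}$ geometrically in $k$, so \eqref{eq_app_ref_01} shows that the series defining $\tilde V(\cdot,u)$ converges in $C^\alpha(\overline{\Omega})$ and that, for $\|u_j\|_{C^{2,\alpha}}$ small, $\|\tilde V(\cdot,u)\|_{C^\alpha}\le C\|u\|_{C^{2,\alpha}}^2$ and $\|\tilde V(\cdot,u_1)-\tilde V(\cdot,u_2)\|_{C^\alpha}\le C(\|u_1\|_{C^{2,\alpha}}+\|u_2\|_{C^{2,\alpha}})\|u_1-u_2\|_{C^{2,\alpha}}$. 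Together with the boundedness of $L_0^{-1}$ this yields, for $\|f\|_{C^{2,\alpha}(\p\Omega)}<\delta$ and $\|w\|_{C^{2,\alpha}},\|w_j\|_{C^{2,\alpha}}\le\rho$,
\[
\|T_f(w)\|_{C^{2,\alpha}(\overline{\Omega})}\le C\bigl(\|f\|_{C^{2,\alpha}(\p\Omega)}+\|w\|_{C^{2,\alpha}(\overline{\Omega})}\bigr)^2,\qquad \|T_f(w_1)-T_f(w_2)\|_{C^{2,\alpha}(\overline{\Omega})}\le C(\delta+\rho)\|w_1-w_2\|_{C^{2,\alpha}(\overline{\Omega})}.
\]
For $\delta,\rho$ small, $T_f$ is a contraction of the ball $\{\|w\|_{C^{2,\alpha}}\le\rho\}$ of $X$ into itself, so the Banach fixed-point theorem produces a unique $w=w_f$ there, and $u_f:=u_f^0+w_f$ solves \eqref{eq_app_ref_1}; feeding the quadratic bound $\|w_f\|\le C(\|f\|+\|w_f\|)^2$ back against the smallness of $\|f\|+\|w_f\|$ gives $\|w_f\|\le C\|f\|$, hence $\|u_f\|_{C^{2,\alpha}(\overline{\Omega})}\le C\|f\|_{C^{2,\alpha}(\p\Omega)}$.

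For the uniqueness in $\{\|u\|_{C^{2,\alpha}(\overline{\Omega})}<C\delta\}$, any such solution $u$ of \eqref{eq_app_ref_1} gives $w:=u-u_f^0\in X$ with $\|w\|_{C^{2,\alpha}}\le\|u\|+\|u_f^0\|$ small, so $w$ lies in the ball on which $T_f$ is a contraction and solves $w=T_f(w)$; hence $w=w_f$ and $u=u_f$ (after shrinking constants if necessary). For the holomorphic dependence on $f$, one checks that $(w,f)\mapsto w-T_f(w)$ is a holomorphic map from a neighborhood of $(0,0)$ in $X\times C^{2,\alpha}(\p\Omega)$ into $X$: the only non-polynomial ingredient $u\mapsto\tilde V(\cdot,u)$ is holomorphic as a map $C^{2,\alpha}(\overline{\Omega})\to C^\alpha(\overline{\Omega})$ because it is the locally uniform limit of the polynomial partial sums of \eqref{eq_app_V} (by the Cauchy estimates above), while $f\mapsto u_f^0$ and $L_0^{-1}$ are bounded linear; since its partial derivative in $w$ at $w_f$ is $I-L_0^{-1}\circ D_uN(u_f^0+w_f)$, invertible by smallness, the analytic implicit function theorem makes $f\mapsto w_f$, and therefore $f\mapsto u_f$, holomorphic.

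The one point needing genuine care — standard though it is — is the interplay between the spaces $C^{2,\alpha}(\overline{\Omega})$ and $C^\alpha(\overline{\Omega})$: that $u\mapsto q(\nabla u)^2$ and $u\mapsto V(\cdot,u)$ are well defined as, respectively, polynomial and holomorphic maps $C^{2,\alpha}(\overline{\Omega})\to C^\alpha(\overline{\Omega})$ with the quadratic-type bounds used above, which rests on \eqref{eq_app_ref_01} and on the Cauchy estimates for the $C^\alpha(\overline{\Omega})$-valued series \eqref{eq_app_V}. Everything else is the usual Schauder/Fredholm theory for $L=-\Delta+V_1$ under hypothesis (c) together with the contraction mapping principle, and it is precisely the second-order vanishing of the nonlinearity at $u=0$ that lets the fixed-point ball close.
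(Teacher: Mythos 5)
Your argument is correct, but it is structured differently from the paper's. The paper applies the analytic implicit function theorem \emph{once}, directly to the full map
\[
F(f,u)=\bigl(-\Delta u+q(\nabla u)^2+V(x,u),\ u|_{\p\Omega}-f\bigr)
\]
between Banach spaces: after checking that $F$ is holomorphic (local boundedness plus weak holomorphy, via Cauchy estimates and the algebra property \eqref{eq_app_ref_01}) and that $\p_uF(0,0)=(-\Delta+V_1,\ \cdot|_{\p\Omega})$ is an isomorphism under (c), the implicit function theorem delivers existence, local uniqueness, and the holomorphic parametrization $u=S(f)$ simultaneously; the estimate $\|u\|_{C^{2,\alpha}}\le C\|f\|_{C^{2,\alpha}}$ is then just the Lipschitz continuity of $S$ at $0$. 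You instead peel off the harmonic extension $u_f^0$ and the linear part $L=-\Delta+V_1$, rewrite the problem as the fixed-point equation $w=L_0^{-1}N(u_f^0+w)$ for a nonlinearity $N$ that is genuinely quadratic, and run a Banach contraction argument, invoking the analytic implicit function theorem only at the very end to obtain holomorphic dependence. The two proofs consume the same raw material --- Cauchy estimates for the $C^\alpha(\overline\Omega)$-valued series \eqref{eq_app_V}, the H\"older multiplication inequality \eqref{eq_app_ref_01}, and Schauder invertibility of $L$ under (c) --- but yours is longer and more hands-on, in exchange for the slightly sharper quantitative conclusion $\|u_f-u_f^0\|_{C^{2,\alpha}(\overline\Omega)}=O(\|f\|_{C^{2,\alpha}(\p\Omega)}^2)$, which the paper's more compact route does not make explicit. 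One could even dispense with the final appeal to the analytic implicit function theorem in your version, since the fixed point of a holomorphically parametrized uniform contraction depends holomorphically on the parameter; but as written your argument is complete and correct.
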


\begin{proof}
We shall follow \cite{LLLS} and in order to prove this result we shall rely on the implicit function theorem for holomorphic maps between complex Banach spaces, see \cite[p. 144]{Poschel_Trub_book}. To that end,  let us set
\[
B_1=C^{2,\alpha}(\p \Omega), \quad  B_2=C^{2,\alpha}(\overline{\Omega}), \quad B_3=C^{\alpha}(\overline{\Omega})\times C^{2,\alpha}(\p \Omega),
\]
and consider the map,
\begin{equation}
\label{eq_app_ref_2}
F:B_1\times B_2\to B_3, \quad F(f,u)=(-\Delta u+ q(x)(\nabla u)^2+ V(x,u), u|_{\p \Omega}-f).
\end{equation}
Let us first show that the map $F$ has indeed the mapping property given in \eqref{eq_app_ref_2}. We have $-\Delta u\in C^{\alpha}(\overline{\Omega})$ and an application of \eqref{eq_app_ref_01} gives $q(x)(\nabla u)^2\in C^{\alpha}(\overline{\Omega})$. We only need to check that $V(x,u(x))\in C^{\alpha}(\overline{\Omega})$.  To this end, let us first observe that by Cauchy's estimates, the coefficients $V_k(x)$ in  \eqref{eq_app_V} satisfy
\begin{equation}
\label{eq_app_ref_3}
\|V_k\|_{C^{\alpha}(\overline{\Omega})}\le \frac{k!}{R^k}\sup_{|z|=R}\|V(\cdot, z)\|_{C^{\alpha}(\overline{\Omega})}, \quad R>0.
\end{equation}
Using \eqref{eq_app_ref_01} and \eqref{eq_app_ref_3}, we get for all $k=1,2,\dots$,
\begin{equation}
\label{eq_app_ref_4}
\bigg\| \frac{V_k}{k!} u^k \bigg\|_{C^{\alpha}(\overline{\Omega})}\le \frac{C^k}{R^k}\|u\|_{C^{\alpha}(\overline{\Omega})}^k \sup_{|z|=R}\|V(\cdot, z)\|_{C^{\alpha}(\overline{\Omega})}.
\end{equation}
Choosing $R=2C\|u\|_{C^{\alpha}(\overline{\Omega})}$, we see that the series $\sum_{k=1}^\infty V_k(x) \frac{z^k}{k!}$
converges in $C^{\alpha}(\overline{\Omega})$ and therefore, $V(x,u(x))\in C^{\alpha}(\overline{\Omega})$. Furthermore,
\[
\|V(\cdot,u(\cdot))\|_{ C^{\alpha}(\overline{\Omega})}\le \sup_{|z|=2C\|u\|_{C^{\alpha}(\overline{\Omega})}}\|V(\cdot, z)\|_{C^{\alpha}(\overline{\Omega})}.
\]

We next claim that the map  $F$ in \eqref{eq_app_ref_2} is holomorphic.  To this end, let us observe that since $F$ is clearly locally bounded, it suffices verify the weak holomorphy,  see \cite[p. 133]{Poschel_Trub_book}. In doing so, let $(f_0, u_0), (f,u)\in B_1\times B_2$, and let us show that the map
\[
 \lambda\mapsto F((f_0, u_0)+\lambda (f,u))
\]
is holomorphic in $\C$ with values in $B_3$.  Clearly, we only have to check that the map $\lambda\mapsto V(x,u_0(x)+\lambda u_1(x))$ is holomorphic in $\C$ with values in $C^\alpha (\overline{\Omega})$. This follows from the fact that the series
\[
\sum_{k=1}^\infty  \frac{V_k}{k!} (u_0+\lambda u_1)^k
\]
converges in $C^\alpha(\overline{\Omega})$, locally  uniformly in  $\lambda\in \C$, see \eqref{eq_app_ref_4}.

We have $F(0,0)=0$ and the partial differential $\p_u F(0,0): B_2\to B_3$ is given by
\[
\p_u F(0,0)v=(-\Delta v+V_1 v , v|_{\p \Omega}).
\]
In view of (c), an application of \cite[Theorem 6.15]{Gil_Tru_book} allows us to conclude that the map  $\p_u F(0,0): B_2\to B_3$ is a linear isomorphism.

By the implicit function theorem, see \cite[p. 144]{Poschel_Trub_book}, we get that there exists  $\delta>0$ and a unique holomorphic map $S: B_\delta(\p \Omega)\to C^{2,\alpha}(\overline{\Omega})$ such that $S(0)=0$ and $F(f, S(f))=0$ for all $f\in B_\delta(\p \Omega)$. Setting $u=S(f)$ and noting that $S$ is Lipschitz continuous and $S(0)=0$, we see that
\[
\|u\|_{C^{2,\alpha}(\overline{\Omega})}\le C\|f\|_{C^{2,\alpha}(\overline{\Omega})}.
\]
The proof is complete.
\end{proof}

\begin{cor}
The map
\[
B_\delta(\p \Omega)\to C^{1,\alpha}(\overline{\Omega}), \quad f\mapsto \p_\nu u_f|_{\p \Omega}
\]
is holomorphic.
\end{cor}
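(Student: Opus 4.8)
The plan is to exhibit the map $f\mapsto \p_\nu u_f|_{\p \Omega}$ as the composition of the holomorphic solution operator $S\colon B_\delta(\p \Omega)\to C^{2,\alpha}(\overline{\Omega})$, $f\mapsto u_f$, furnished by Theorem~\ref{thm_well-posedness}, with a bounded linear trace-type operator, and then to use that a bounded linear map between complex Banach spaces is holomorphic and that holomorphy is stable under composition.

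Concretely, let $N\in C^\infty(\overline{\Omega};\R^n)$ be a vector field whose restriction to $\p \Omega$ is the unit outer normal $\nu$, and consider the linear operator
\[
T\colon C^{2,\alpha}(\overline{\Omega})\to C^{1,\alpha}(\overline{\Omega}),\qquad Tu=N\cdot\nabla u.
\]
If $u\in C^{2,\alpha}(\overline{\Omega})$, then each $\p_j u\in C^{1,\alpha}(\overline{\Omega})$ with $\|\p_j u\|_{C^{1,\alpha}(\overline{\Omega})}\le \|u\|_{C^{2,\alpha}(\overline{\Omega})}$; since $C^{1,\alpha}(\overline{\Omega})$ is an algebra and the estimate \eqref{eq_app_ref_01} shows that multiplication by the fixed function $N_j\in C^\infty(\overline{\Omega})$ is bounded on $C^{1,\alpha}(\overline{\Omega})$, the operator $T$ is bounded. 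By construction $(Tu)|_{\p \Omega}=\p_\nu u|_{\p \Omega}$, so $\p_\nu u_f|_{\p \Omega}=\big(T(S(f))\big)|_{\p \Omega}$. If one wishes to view the target as the boundary space $C^{1,\alpha}(\p \Omega)$, it suffices to compose $T$ further with the bounded linear restriction map $C^{1,\alpha}(\overline{\Omega})\to C^{1,\alpha}(\p \Omega)$, which is well defined because $\p \Omega$ is a $C^\infty$ hypersurface (flatten locally and use a partition of unity, each piece being the restriction of a H\"older function to a hyperplane).

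It remains to record that $T$, being linear and bounded, is holomorphic. Indeed $T$ is locally bounded, and for every continuous linear functional $\ell$ on the target space the map $u\mapsto \ell(Tu)=(T^*\ell)(u)$ is $\C$-linear, hence entire; thus $T$ is weakly holomorphic, and therefore holomorphic by the criterion already used in the proof of Theorem~\ref{thm_well-posedness}, see \cite[p.~133]{Poschel_Trub_book}. Since $S$ is holomorphic by Theorem~\ref{thm_well-posedness} and a composition of holomorphic maps between complex Banach spaces is holomorphic, the map $f\mapsto \p_\nu u_f|_{\p \Omega}=T(S(f))$ (followed, if desired, by the restriction to $\p \Omega$) is holomorphic on $B_\delta(\p \Omega)$. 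There is no real obstacle here beyond this bookkeeping; the only point deserving a remark is the boundedness of taking the trace of the gradient on the boundary, which follows from the smoothness of $\p \Omega$ together with the algebra property \eqref{eq_app_ref_01}.
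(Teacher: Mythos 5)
The paper states this corollary without proof, treating it as an immediate consequence of Theorem~\ref{thm_well-posedness}; your argument correctly fills in exactly the reasoning being implied. The decomposition into the holomorphic solution operator $S$ followed by the bounded linear map $u\mapsto (N\cdot\nabla u)|_{\p\Omega}$, together with the standard facts that bounded linear operators between complex Banach spaces are holomorphic and that holomorphy is preserved under composition, is the natural and correct route. One small remark: the paper's stated codomain $C^{1,\alpha}(\overline{\Omega})$ is presumably a typographical slip for $C^{1,\alpha}(\p\Omega)$ given the restriction $|_{\p\Omega}$, and you sensibly handle both readings by noting that composing further with the bounded trace operator $C^{1,\alpha}(\overline{\Omega})\to C^{1,\alpha}(\p\Omega)$ (available since $\p\Omega$ is a smooth compact hypersurface, with $C^{k,\alpha}(\p\Omega)$ defined as in the appendix) poses no difficulty.
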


\end{appendix}

\section*{Acknowledgements}
K.K. is very grateful to Mikko Salo for providing the references \cite{Browder}, \cite{LLS_poisson},  and \cite{LLS_conformal}, and to Daniel Tataru for a very helpful discussion on semilinear PDE. The research of K.K. is partially supported by the National Science Foundation (DMS 1815922). The research of G.U. is partially supported by NSF and a Si-Yuan Professorship of HKUST. Part of the work was supported by the NSF grant DMS-1440140 while both authors  were in residence at MSRI in Berkeley, California, during Fall 2019 semester.

\end{document}